\numberwithin{equation}{section}
\newtheorem{thm}{Theorem}[section]
\newtheorem{cor}[thm]{Corollary}
\newtheorem{lem}[thm]{Lemma}
\newtheorem{prop}[thm]{Proposition}
\newtheorem{defn}[thm]{Definition}
\newtheorem{rem}[thm]{Remark}
\newcommand{\coker}{\mbox{Coker}\,}
\newcommand{\Hom}{\mbox{Hom}\,}
\newcommand{\Ext}{\mbox{Ext}\,}
\newcommand{\Tor}{\mbox{Tor}\,}
\newcommand{\Spec}{\mbox{Spec}\,}
\newcommand{\Max}{\mbox{Max}\,}
\newcommand{\Ass}{\mbox{Ass}\,}
\newcommand{\Supp}{\mbox{Supp}\,}
\newcommand{\depth}{\mbox{depth}\,}
\renewcommand{\dim}{\mbox{dim}\,}
\newcommand{\pd}{\mbox{pd}\,}
\newcommand{\id}{\mbox{id}\,}
\newcommand{\fd}{\mbox{fd}\,}
\newcommand{\Id}{\mbox{Id}\,}
\renewcommand{\H}{\mbox{H}}
\newcommand{\fa}{\mathfrak{a}}
\newcommand{\fm}{\mathfrak{m}}
\newcommand{\fp}{\mathfrak{p}}
\newcommand{\fq}{\mathfrak{q}}
\begin{document}
%%% ----------------------------------------------------------------------
%%% ----------------------------------------------------------------------
%%% ----------------------------------------------------------------------
\bibliographystyle{amsplain}
%%% ----------------------------------------------------------------------
%%% ----------------------------------------------------------------------
%%% ----------------------------------------------------------------------

%%% ----------------------------------------------------------------------
%%% ----------------------------------------------------------------------
%%% ----------------------------------------------------------------------

\title[Tensor product of $C$-injective modules]
 {Tensor product of $C$-injective modules}

%%% ----------------------------------------------------------------------
%%% ----------------------------------------------------------------------
%%% ----------------------------------------------------------------------
\bibliographystyle{amsplain}
%%% ----------------------------------------------------------------------
%%% ----------------------------------------------------------------------
%%% ----------------------------------------------------------------------

     \author[M. Rahmani]{Mohammad Rahmani}
     \author[A.- J. Taherizadeh]{Abdoljavad Taherizadeh}

\address{Faculty of Mathematical Sciences and Computer,
Kharazmi University, Tehran, Iran.}

\email{m.rahmani.math@gmail.com}
\email{taheri@khu.ac.ir}

\keywords{Semidualizing modules, dualizing modules, $C$--injective modules}
\subjclass[2010]{13C05, 13D05, 13D07, 13H10}

%%% ----------------------------------------------------------------------
%%% ----------------------------------------------------------------------
%%% ----------------------------------------------------------------------

\begin{abstract}
Let $R$ be a Noetherian ring and let $C$ be a semidualizing $R$-module. In this paper, we are concerned with the tensor and torsion product of
$C$-injective modules. Firstly, it is shown that the tensor product of any two $C$-injective $R$-modules is $C$-injective if and only if the
injective hull of $C$ is $C$-flat. Secondly, it is proved that $C$ is a pointwise dualizing $R$-module if and only if $\Tor^R_i(M,N)$ is $C$-injective for all $C$-injective $R$-modules $M$ and $N$, and all $ i \geq 0$. These results recover the celebrated theorems of Enochs and Jenda \cite{EJ2}.
\end{abstract}

\maketitle

%%% ----------------------------------------------------------------------
%%% ----------------------------------------------------------------------
%%% ----------------------------------------------------------------------
\bibliographystyle{amsplain}
%%% ----------------------------------------------------------------------
%%% ----------------------------------------------------------------------
%%% ----------------------------------------------------------------------
\section{introduction}

Throughout this paper, $R$ is a commutative Noetherian ring with non-zero identity. A classical question of Yoneda asks when the tensor
product of two injective modules is injective. The first partial result related
to this question was given by Hattori \cite{H2}. He showed that the tensor product
of two injective $R$-modules is injective when $R$ is an integral domain. It is easy to see that
the Hattori's result is also true when $R$ is the product of a
finite number of integral domains. Next, Ishikawa \cite{I} showed that if $R$ is
a Noetherian ring such that the injective hull $E(R)$ of $R$ is a flat $R$-module, then the
tensor product of any two injective $R$-modules is injective. It is well-known
that if $R$ is Gorenstein, then the injective hull of $R$ is flat. More precisely, if $R$ is Gorenstein, then $E(R) = S^{-1}R$, where $S$ is the set of non-zero divisors of $R$. As a generalization, Cheatham and Enochs in \cite{CE}, considered a ring $R$, with the property that $E(R)$ is flat, and showed that $R$ has this property if and only if $R_{\fp}$ is Gorenstein for all $\fp \in \Ass(R)$. Finally, a complete answer
to the question of Yoneda was given by Enochs and Jenda in \cite{EJ2}. They proved that $E(R)$ is flat if and only if the tensor
product of any two injective $R$-modules is injective. Also, they showed that $R$ is Gorenstein if and only if the torsion product of any two injective $R$-modules is injective.

Grothendieck \cite{H1} introduced dualizing modules as tools for investigating cohomology
theories in algebraic geometry. An $R$-module is called a pointwise dualizing, if its localization at any maximal ideal of $R$ is dualizing. Note that
 if $\dim(R) < \infty$, then a pointwise dualizing module is dualizing. Over
a Noetherian ring $R$, a finitely generated $R$-module $C$ is semidualizing if the natural
homothety map $ R\longrightarrow \Hom_R(C,C) $ is an isomorphism and $ \Ext^i_R(C,C)=0 $ for all $ i>0 $. Foxby \cite{F}, Vasconcelos \cite{V}
and Golod \cite{G} independently initiated the study of semidualizing modules.

In this paper, we generalize the main results of \cite{EJ2}, and characterize dualizing modules in terms of the torsion product of $C$-injective modules (See Definition 2.4). Note that if $C$ is dualizing, then $E(C) = S^{-1}C \cong C \otimes_R S^{-1}R $ is $C$-flat. Hence we consider those semidualizing $R$-modules $C$, whose
 injective hulls are $C$-flat. We prove that $E(C)$ is $C$-flat if and only if the tensor product of any two $C$-injective $R$-modules is $C$-injective. Next we characterize dualizing modules as follows: $C$ is pointwise dualizing if and only if the torsion product of any two $C$-injective $R$-modules is $C$-injective.

%++++++++++++++++++++++++++++++++++++++++++++++++++++++++++++++++++++++++++++++++++++++++++++++++++++
\section{preliminaries}

In this section, we recall some definitions and facts which are needed throughout this
paper. By an injective cogenerator, we always mean an injective $R$-module $E$ for which $ \Hom_R(M,E) \neq 0 $ whenever $M$ is a nonzero $R$-module. For an $R$-module $M$, the injective hull of $M$, is always denoted by $E(M)$. Also, the flat cover of $M$ is always denoted by $F(M)$. For basic definitions and properties of envelope and covers, see the textbook \cite{EJ1}.

%-----------------------------------------------------------------------------------------------
\begin{defn}
%2.1
  \emph{Let $\mathcal{X} $ be a class of $R$-modules and $M$ an $R$-module. An $\mathcal{X}$-\textit{resolution} of $M$ is a complex of $R$-modules in $\mathcal{X} $ of the form \\
     \centerline{ $X = \ldots \longrightarrow X_n \overset{\partial_n^X} \longrightarrow X_{n-1} \longrightarrow \ldots \longrightarrow X_1 \overset{\partial_1^X}\longrightarrow X_0 \longrightarrow 0$}
such that $\H_0(X) \cong M$ and $\H_n(X) = 0$ for all $ n \geq 1$.}
\emph{Also the $ \mathcal{X}$-\textit{projective dimension} of $M$ is the quantity \\
\centerline{ $ \mathcal{X}$-$\pd_R(M) := \inf \{ \sup \{ n \geq 0 | X_n \neq 0 \} \mid X$ is an $\mathcal{X}$-resolution of $M \}$}.}
\emph{So that in particular $\mathcal{X}$-$\pd_R(0)= - \infty $. The modules of $\mathcal{X}$-projective dimension zero are precisely the non-zero modules in $\mathcal{X}$. The terms of $\mathcal{X}$-\textit{coresolution} and $\mathcal{X}$-$\id$ are defined dually.}
\end{defn}

The following remark will be useful in the proof of our main theorems. For the proof, see \cite[Theorems 3.2.12 and 3.2.14]{EJ1}.

\begin{rem}
%2.2
\emph{Let $S$ be an $R$-algebra. Let $M$ be an $R$-module and let $N$ and $L$ be $S$-modules. The \textit{Hom-evaluation} homomorphism,}

\centerline{ $\theta_{LNM} : L \otimes_S \emph\Hom_R(N,M) \rightarrow \emph\Hom_R(\emph\Hom_S(L,N),M)$}
\emph{given by $ \theta_{LNM}(l \otimes \psi)(\varphi) = \psi (\varphi (l))$ where $l \in L$, $\psi \in \Hom_R(N,M)$ and $\varphi \in \Hom_S(L,N)$, is an isomorphism in either of the following conditions:}
\begin{itemize}
           \item[(i)]{$L$ \emph{is finitely generated and projective.}}
            \item[(ii)]{$L$ \emph{is finitely generated, and $M$ is injective.}}
\end{itemize}
\emph{Also the \textit{tensor-evaluation} homomorphism,}

\centerline{$ \omega_{LNM} : \emph\Hom_S(L,N) \otimes_R M \rightarrow \emph\Hom_S(L, N \otimes_R M)$ }
\emph{given by $ \omega_{LNM} (\psi \otimes m)(l) = \psi(l) \otimes m$ where $l \in L$, $m \in M$ and $\psi \in \Hom_S(L,N)$, is an isomorphism in either of the following conditions:}
\begin{itemize}
           \item[(i)]{$L$ \emph{is finitely generated and projective.}}
            \item[(ii)]{$L$ \emph{is finitely generated, and $M$ is flat.}}
\end{itemize}
\end{rem}

\begin{defn}
%2.3
 \emph{A finitely generated $ R $-module $ C $ is \textit{semidualizing} if it satisfies the following conditions:
\begin{itemize}
             \item[(i)]{The natural homothety map $ R\longrightarrow \Hom_R(C,C) $ is an isomorphism.}
             \item[(ii)]{$ \Ext^i_R(C,C)=0 $ for all $ i>0 $.}
          \end{itemize}}
\end{defn}
For example a finitely generated projective $R$-module of rank 1 is semidualizing. If $R$ is Cohen-Macaulay, then an $R$-module $D$ is dualizing if it is semidualizing and that $\id_R (D) < \infty $ . For example the canonical module of a Cohen-Macaulay local ring, if exists, is dualizing.

\begin{defn}
\emph{Following \cite{HJ}, let $C$ be a semidualizing $R$-module. We set \\
 $\mathcal{F}_C(R) =$ the subcategory of $R$-modules  $C \otimes_R F$ where $F$ is a flat $R$--module. \\
 $\mathcal{I}_C(R) =$ the subcategory of $R$-modules  $\Hom_R(C,I) $ where $I$ is an injective $R$--module. \\
Modules in $\mathcal{F}_C(R)$ (resp. $\mathcal{I}_C(R)$) are called $C$-\textit{flat} (resp. $C$-\textit{injective}).
We use the notation $C$-$\fd$ (resp. $C$-$\id$)  instead of  $\mathcal{F}_C$-$\pd$ (resp. $\mathcal{I}_C$-$\id$ ).
A complete $ \mathcal{F} \mathcal{F}_C$-\textit{resolution} is a complex $X$ of $R$-modules such that
\begin{itemize}
           \item[(i)]{$X$ is exact and $X \otimes_R I$ is exact for each $I \in \mathcal{I}_C(R)$, and that}
            \item[(ii)]{$X_i \in \mathcal{F}_C(R)$ for all $i < 0$ and $X_i$ is flat for all $i \geq 0$.}
             \end{itemize}
             An $R$-module $M$ is called G$_C$-\textit{flat} if there exists a complete $ \mathcal{F} \mathcal{F}_C$-resolution $X$ such that
$M \cong \coker (\partial_1^X)$. All flat $R$-modules and all $R$-modules in $\mathcal{F}_C(R)$ are G$_C$-flat.}

\emph{A complete $\mathcal{I}_C \mathcal{I}$-{coresolution} is a complex $Y$ of $R$-modules such that
	\begin{itemize}
		\item[(i)]{$Y$ is exact and $\Hom_R(I,Y)$ is exact for each $I \in \mathcal{I}_C(R)$, and that}
		\item[(ii)]{$Y^i \in \mathcal{I}_C(R)$ for all $i \geq 0$ and $Y^i$ is injective for all $i < 0$.}
	\end{itemize}
	An $R$-module $M$ is called G$_C$-\textit{injective} if there exists a complete $\mathcal{I}_C \mathcal{I}$-coresolution $Y$ such that
	$M \cong \coker (\partial^1_Y)$. All injective $R$-modules and all $R$-modules in $\mathcal{I}_C(R)$ are G$_C$-injective. Note that when $C = R$ these notions recover the concepts of Gorenstein flat modules
	and Gorenstein injective modules, respectively.}
\end{defn}

In the rest of this section, we collect the basic properties of (semi)dualizing modules and related homological dimensions.

\begin{prop}\label{B0}
%2.4
Let $C$ be a semidualizing $R$-module. Then we have the following:
\begin{itemize}
           \item[(i)]{$\emph\Supp (C) = \emph\Spec (R)$, $\emph\dim C = \emph\dim R$ and $\emph\Ass (C) = \emph\Ass (R)$.}
           \item[(ii)] { If $R$ is local, then $C$ is indecomposable. }
            \item[(iii)]{ If $ x \in R $ is $R$--regular, then $C/ xC$ is a semidualizing $R/ xR$-module .}
             \item[(iv)]{$\emph\depth_R (C) = \emph\depth (R) $.}
             \item[(v)]{ If $R \rightarrow S$ is a flat ring homomorphism, then $ C \otimes_R S$ is a semidualizing $S$-module.}
             \item[(vi)]{ Let $M$ be an $R$-module. Then $M \neq 0$ if and only if $ \emph\Hom_R(C,M) \neq 0$ . Also $M \neq 0$ if and only if  $ C \otimes_R M \neq 0 $ .}
                 \end{itemize}
\end{prop}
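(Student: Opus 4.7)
My plan is to treat the six parts in an order that lets later ones lean on earlier ones, systematically exploiting the two defining properties $R \cong \Hom_R(C,C)$ and $\Ext^i_R(C,C)=0$ for $i>0$.

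I begin with (i). The containment $\Ann_R(C) \subseteq \Ann_R(\Hom_R(C,C)) = \Ann_R(R) = 0$ shows that $C$ is faithful, so $\Supp C = \Spec R$ and $\dim C = \dim R$. The equality $\Ass C = \Ass R$ then follows from the standard formula $\Ass_R \Hom_R(C,C) = \Supp C \cap \Ass C$ (valid because $C$ is finitely generated) together with $\Hom_R(C,C) \cong R$. Part (ii) is immediate: a nontrivial decomposition $C = C_1 \oplus C_2$ would produce nontrivial orthogonal idempotents in $\Hom_R(C,C) \cong R$, contradicting locality. Part (v) is a base-change computation: finite presentation of $C$ and flatness of $S$ yield $\Hom_S(C\otimes_R S, C\otimes_R S) \cong \Hom_R(C,C)\otimes_R S \cong S$ and, similarly, $\Ext^i_S(C\otimes_R S, C\otimes_R S) \cong \Ext^i_R(C,C)\otimes_R S = 0$ for $i>0$.

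For (iii), part (i) ensures $x$ is $C$-regular; feeding
\[0 \lo C \st{x}\lo C \lo C/xC \lo 0\]
into $\Hom_R(C,-)$ collapses the long exact sequence, via $\Hom_R(C,C)=R$ and $\Ext^i_R(C,C)=0$, to $\Hom_R(C,C/xC) \cong R/xR$ and $\Ext^i_R(C,C/xC)=0$ for $i>0$. A change-of-rings identification $\Ext^i_{R/xR}(C/xC,-) \cong \Ext^i_R(C,-)$ on $R/xR$-modules (valid because $\Tor^R_i(C,R/xR)=0$ for $i\geq 1$) then converts these into the semidualizing axioms for $C/xC$ over $R/xR$. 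Part (iv) follows by induction on $\depth R$: the base $\depth R=0$ uses $\fm \in \Ass R = \Ass C$ from (i); the inductive step picks $x \in \fm$ regular on both $R$ and $C$, applies (iii), and combines this with $\depth_R C = \depth_R(C/xC)+1 = \depth_{R/xR}(C/xC)+1$.

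The main obstacle will be part (vi), where the Hom and tensor directions behave asymmetrically. For the Hom direction, given $M \neq 0$ I pick $\fp \in \Ass M$ so that $R/\fp \hookrightarrow M$, whence $\Hom_R(C,R/\fp) \hookrightarrow \Hom_R(C,M)$; the source is nonzero because localizing at $\fp$ produces $\Hom_{k(\fp)}(C_\fp/\fp C_\fp, k(\fp))$, which is nonzero by Nakayama applied to $C_\fp \neq 0$. The tensor direction resists this approach because $C\otimes_R -$ is only right exact, so the embedding $R/\fp \hookrightarrow M$ does not transport. My plan here is to localize at $\fp$, reduce to the local case, and exploit the Hom-evaluation identification
\[C_\fp \otimes_{R_\fp} \Hom_{R_\fp}(C_\fp, E) \;\cong\; \Hom_{R_\fp}(\Hom_{R_\fp}(C_\fp, C_\fp),\, E) \;=\; E\]
from Remark 2.2 (with $E = E_{R_\fp}(k(\fp))$, which is injective, and $C_\fp$ finitely generated) to recognize the injective envelope $E$ as sitting in the image of the functor $C_\fp \otimes_{R_\fp} -$; injectivity of $E$ extends the embedding $k(\fp) \hookrightarrow M_\fp$ to a nonzero $\phi : M_\fp \to E$, and a diagram chase relating $\phi$ to this Hom-evaluation isomorphism should produce a nonzero element of $C_\fp \otimes_{R_\fp} M_\fp$. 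Making that chase go through without tacitly assuming what we are trying to prove is the delicate step I anticipate needing the most care.
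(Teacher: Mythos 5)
Parts (i)--(v) and the Hom half of (vi) are correct. Your route is a fleshed-out version of the paper's terse proof (the paper dismisses (i), (iii), (v) as consequences of the definition, gets (ii) from the indecomposability of $R$ together with $R\cong\Hom_R(C,C)$, gets (iv) from $\Ass(C)=\Ass(R)$ and an easy induction, and proves the Hom half of (vi) via $\Ass_R(\Hom_R(C,M))=\Supp(C)\cap\Ass_R(M)=\Ass_R(M)$); your substitutes --- the orthogonal-idempotent argument for (ii), the long exact sequence plus the change-of-rings isomorphism for (iii), and the choice of $\fp\in\Ass(M)$ plus Nakayama for the Hom half of (vi) --- all work.

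The genuine gap is the tensor half of (vi), precisely the step you flag as ``delicate'' and do not carry out. From a nonzero map $\phi\colon M_\fp\to E\cong C_\fp\otimes_{R_\fp}\Hom_{R_\fp}(C_\fp,E)$ there is no evident chase yielding a nonzero element of $C_\fp\otimes_{R_\fp}M_\fp$: the fact that $M_\fp$ maps nontrivially \emph{into} a module of the form $C_\fp\otimes_{R_\fp}X$ gives no control over $C_\fp\otimes_{R_\fp}M_\fp$, since $\phi$ need not factor through anything of that shape and $C_\fp\otimes_{R_\fp}-$ applied to $\phi$ could a priori kill it. So, as written, this implication is not proved. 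The repair is one line and is exactly the paper's argument, whose ingredients you already have: take $E$ an injective cogenerator; by Hom-tensor adjointness $\Hom_R(C\otimes_R M,E)\cong\Hom_R(C,\Hom_R(M,E))$; if $M\neq0$ then $\Hom_R(M,E)\neq0$, so the already-proved Hom half of (vi) makes the right-hand side nonzero, and hence $C\otimes_R M\neq0$. This bypasses both the localization and the Hom-evaluation identification of $E$ entirely. (The converse implications in (vi) are trivial and should at least be noted.)
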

\begin{proof}

 The parts (i), (iii) and (v) follow from the definition of semidualizing modules. For (ii), note that if $R$ is local, then $R$ is indecomposable as an $R$-module. Now the isomorphism $R \cong  \Hom_R (C,C) $ shows that $C$ is indecomposable. For (iv), note that an element of $R$ is $R$-regular if and only if it is $C$-regular since $\Ass (C) = \Ass (R)$. Now an easy induction yields the equality. For (vi), note that $M \neq 0$ if and only if $\Ass_R (\Hom_R(C,M)) = \Ass_R(M) \neq \phi $, and this is the case if and only if $\Hom_R(C,M) \neq 0$. Finally if $E$ is an injective cogenerator, then  $M \neq 0$ if and only if $\Hom_R(M,E) \neq 0$ if and only if
$ \Hom_R(C \otimes_R M,E) \cong \Hom_R(C,\Hom_R(M,E)) \neq 0$, and this is the case if and only if $C \otimes_R M \neq 0$.
\end{proof}

\begin{prop}\label{A1}
%2.5
Let $C$ be a semidualizing $R$-module, $M$ an $R$-module and $E$ be an injective $R$-module.
\begin{itemize}
           \item[(i)]{ If $M$ is a $C$-flat $R$-module, then $\emph\Hom_R(M,E)$ is a $C$-injective $R$-module. Also, if $(R, \fm)$ is local, $M$ is Matlis reflexive and $\emph\Hom_R(M,E(R/ \fm))$ is $C$-flat, then $M$ is $C$-injective. }
           \item[(ii)]{If $M$ is a $C$-injective $R$-module, then $\emph\Hom_R(M,E)$ is a $C$-flat $R$-module }.
           \item[(iii)]{Let $R \rightarrow S$ be a flat ring homomorphism. Then $(C \otimes_R S)$-$\emph\fd_S( M \otimes_R S) \leq C$-$\emph\fd_R(M)$. Also the equality holds if $S$ is a faithfully flat $R$-module }.
               \end{itemize}
\end{prop}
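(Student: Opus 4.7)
For part (i), the plan is to use the definitions of $C$-flat and $C$-injective together with standard Hom-tensor adjointness. If $M = C \otimes_R F$ with $F$ flat, then
\[
\Hom_R(M, E) \cong \Hom_R(C, \Hom_R(F, E)),
\]
and $\Hom_R(F, E)$ is injective since $F$ is flat and $E$ is injective, so $\Hom_R(M, E) \in \mathcal{I}_C(R)$. For the second statement, set $N = \Hom_R(M, E(R/\fm))$; by hypothesis $N = C \otimes_R F$ with $F$ flat, and Matlis reflexivity of $M$ yields $M \cong \Hom_R(N, E(R/\fm)) \cong \Hom_R(C, \Hom_R(F, E(R/\fm)))$, which exhibits $M$ as $C$-injective since $\Hom_R(F, E(R/\fm))$ is injective.

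For part (ii), write $M = \Hom_R(C, I)$ with $I$ injective and apply the Hom-evaluation isomorphism of Remark 2.2(ii), with $L = C$ finitely generated and target $E$ injective, to obtain
\[
\Hom_R(M, E) = \Hom_R(\Hom_R(C, I), E) \cong C \otimes_R \Hom_R(I, E).
\]
It then suffices to invoke the standard fact that over a Noetherian ring, $\Hom_R(I, E)$ is $R$-flat whenever $I$ and $E$ are injective, which yields $\Hom_R(M, E) \in \mathcal{F}_C(R)$.

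For part (iii), the inequality is the easy direction. Starting from a $C$-flat resolution $0 \to C \otimes_R F_n \to \cdots \to C \otimes_R F_0 \to M \to 0$ of length $n = C$-$\fd_R(M)$, apply $- \otimes_R S$; flatness of $S$ preserves exactness, and $(C \otimes_R F_i) \otimes_R S \cong (C \otimes_R S) \otimes_S (F_i \otimes_R S)$ with $F_i \otimes_R S$ flat over $S$. For the equality when $S$ is faithfully flat, the plan is to take a partial $C$-flat resolution $0 \to K_n \to X_{n-1} \to \cdots \to X_0 \to M \to 0$ and use the standard resolving-subcategory fact that $C$-$\fd_R(M) \leq n$ iff $K_n \in \mathcal{F}_C(R)$. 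The hypothesis places $K_n \otimes_R S$ in $\mathcal{F}_{C \otimes_R S}(S)$; setting $F = \Hom_R(C, K_n)$ and using the flat base-change identity $\Hom_R(C, K_n) \otimes_R S \cong \Hom_S(C \otimes_R S, K_n \otimes_R S)$, it follows that $F \otimes_R S$ is $S$-flat, so $F$ is $R$-flat by faithfully flat descent, and the natural map $C \otimes_R F \to K_n$ becomes an isomorphism after $- \otimes_R S$, hence is itself an isomorphism, so $K_n \in \mathcal{F}_C(R)$. This descent step — combining faithfully flat descent for flatness and for isomorphisms with the fact that $\Hom_R(C, -)$ detects the $C$-flat property — is the main obstacle in the proof.
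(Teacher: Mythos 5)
Your parts (i) and (ii) are exactly the paper's (very terse) argument: Hom-tensor adjointness for (i), and Hom-evaluation plus flatness of $\Hom_R(I,E)$ for injective $I,E$ over a Noetherian ring for (ii); the inequality in (iii) is likewise the paper's observation that flat modules base-change to flat modules. So for those parts you are correct and on the same route. For the equality in (iii) the paper's proof actually offers no argument at all, so your descent argument is an addition rather than a divergence, and it is essentially workable: the base-change identity $\Hom_R(C,K_n)\otimes_R S\cong\Hom_S(C\otimes_R S,K_n\otimes_R S)$, faithfully flat descent of flatness via $\Tor$, and descent of the isomorphy of the evaluation map $C\otimes_R\Hom_R(C,K_n)\to K_n$ are all sound.

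Two steps you label as standard do need repair, though. First, the ``resolving-subcategory fact'' that $C$-$\fd_R(M)\le n$ iff the $n$-th kernel of an $\mathcal{F}_C$-resolution lies in $\mathcal{F}_C(R)$: the class $\mathcal{F}_C(R)$ is not resolving (it is not closed under extensions or kernels of epimorphisms in general), so this is not a formal consequence; it is true, but one has to route it through Theorem 2.9(ii) and Foxby equivalence (equivalently, Bass-class two-of-three arguments), both over $R$ and over $S$. Second, you silently assume a partial $\mathcal{F}_C$-resolution $0\to K_n\to X_{n-1}\to\cdots\to X_0\to M\to 0$ of $M$ exists over $R$; this is not automatic, since a module is a homomorphic image of a $C$-flat module only when its trace $\sum_{f\in\Hom_R(C,M)}f(C)$ is all of $M$ (for instance $M=R$ admits no such surjection when $C$ is a non-free canonical module, as the trace ideal of $C$ is then proper), so you must argue that surjectivity of the evaluation map descends along the faithfully flat map at each stage. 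Both issues disappear if you instead use Theorem 2.9(ii) directly: $C$-$\fd_R(M)=\fd_R(\Hom_R(C,M))$ and $(C\otimes_R S)$-$\fd_S(M\otimes_R S)=\fd_S(\Hom_R(C,M)\otimes_R S)$, and then the equality reduces to ascent and descent of ordinary flat dimension along a faithfully flat map, via $\Tor^R_i(\Hom_R(C,M),L)\otimes_R S\cong\Tor^S_i(\Hom_R(C,M)\otimes_R S,L\otimes_R S)$; this is both shorter and avoids the resolution-existence question entirely.
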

\begin{proof}
 For (i), use the Hom-tensor adjointness isomorphism. For (ii), use the Hom-evaluation isomorphism and the fact that $\Hom_R(I,E)$ is flat whenever $I$ is injective. For (iii), observe that if $F$ (resp. $E$) is a flat (resp. an injective) $R$-module, then $F \otimes_R S$ (resp. $E \otimes_R S$) is a flat (resp. an injective) $S$-module.
\end{proof}

\begin{prop}\label{A1}
%2.6
Let $C$ be a semidualizing $R$-module, $M$ an $R$-module and $x \in R$ is $R$-regular.
\begin{itemize}
           \item[(i)]{If $M$ is a $C$-flat $R$-module, then $x$ is $M$-regular. Also $M/xM$ is a $C/xC$-flat $R/xR$-module }.
           \item[(ii)]{If $M$ is a $C$-injective $R$-module, then $\emph\Hom_R(R/xR , M)$ is a $C/xC$-injective $R/xR$-module}.
                 \end{itemize}
\end{prop}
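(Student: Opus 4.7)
The plan is to reduce both parts to routine applications of Hom--tensor adjointness, using Proposition 2.5(iii) to supply the semidualizing module $C/xC$ on the new base ring $R/xR$. For part (i), since $M$ is $C$-flat I would write $M = C \otimes_R F$ for some flat $R$-module $F$. The element $x$ is $R$-regular, hence $C$-regular because $\Ass_R(C) = \Ass_R(R)$ by Proposition 2.5(i); tensoring the exact sequence $0 \to C \xrightarrow{x} C$ with $F$ then yields $0 \to M \xrightarrow{x} M$, so $x$ is $M$-regular. For the structural claim, the identifications
\[
M/xM \cong C \otimes_R (F/xF) \cong (C/xC) \otimes_{R/xR} (F/xF)
\]
reduce the problem to checking that $F/xF$ is flat over $R/xR$, which is a standard verification using the natural isomorphism $N \otimes_{R/xR} (F/xF) \cong N \otimes_R F$ for any $R/xR$-module $N$.

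For part (ii), write $M = \Hom_R(C,I)$ for some injective $R$-module $I$. By Hom--tensor adjointness,
\[
\Hom_R(R/xR, M) \cong \Hom_R(C \otimes_R R/xR, I) \cong \Hom_R(C/xC, I),
\]
and by the change-of-rings adjunction (applicable because $C/xC$ is already annihilated by $x$),
\[
\Hom_R(C/xC, I) \cong \Hom_{R/xR}\bigl(C/xC,\, \Hom_R(R/xR, I)\bigr).
\]
To conclude that this is $C/xC$-injective over $R/xR$, it suffices to show that $\Hom_R(R/xR, I)$ is an injective $R/xR$-module. I would verify this directly via the same adjunction: extending an $R/xR$-homomorphism $N' \to \Hom_R(R/xR, I)$ along an inclusion $N' \hookrightarrow N$ of $R/xR$-modules is equivalent, under adjointness, to extending the corresponding $R$-homomorphism $N' \to I$, which is possible because $I$ is $R$-injective.

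The argument is essentially formal, so I do not anticipate any serious obstacle; the one point requiring attention is invoking the two ``change-of-rings'' facts---that $F/xF$ is $R/xR$-flat in part (i) and that $\Hom_R(R/xR, I)$ is $R/xR$-injective in part (ii)---so that the semidualizing module $C/xC$ furnished by Proposition 2.5(iii) genuinely plays the role of $C$ after passing to $R/xR$.
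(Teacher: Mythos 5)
Your proof is correct, and it is precisely the routine base-change argument the paper has in mind: the paper's own proof of this proposition is just the remark that it ``is straightforward,'' and your verification (flat base change of $F$ and $C$-regularity of $x$ via $\Ass_R(C)=\Ass_R(R)$ for part (i); Hom--tensor adjointness together with the $R/xR$-injectivity of $\Hom_R(R/xR,I)$ for part (ii)) supplies exactly the omitted details. No gaps.
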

\begin{proof}
It is straightforward.
\end{proof}

\begin{prop}\label{A1}
%2.7
Let $(R, \fm)$ be a local ring and let $C$ be a semidualizing $R$-module.
\begin{itemize}
           \item[(i)]{$C$ is a dualizing $R$-module if and only if $C \otimes_R \widehat{R}$ is a dualizing $\widehat{R}$-module }.
           \item[(ii)]{ Let $x \in \fm$  be $R$-regular. Then $C$ is a dualizing $R$-module if and only if $C/xC$ is a dualizing $R/xR$-module}.
                 \end{itemize}
\end{prop}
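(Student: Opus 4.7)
My plan is to reduce ``$C$ is dualizing'' to the pair of conditions ``$C$ is semidualizing'' and ``$\id_R(C)<\infty$,'' and then to transfer each of these two conditions along the ring maps $R\to\widehat{R}$ and $R\to R/xR$ separately. Semiduality of the base-changed modules is handed to us by Propositions 2.7(v) and 2.7(iii), respectively, so in each part the real task is to show that finite injective dimension is preserved along the map in question.

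For (i), since $(R,\fm)$ is local, the map $R\to\widehat{R}$ is faithfully flat, so Proposition 2.7(v) already gives that $C\otimes_R\widehat{R}$ is a semidualizing $\widehat{R}$-module. The remaining equivalence
\[
\id_R(C)<\infty \;\Longleftrightarrow\; \id_{\widehat{R}}(C\otimes_R\widehat{R})<\infty
\]
I would obtain from Bass' characterization $\id_R(M)=\sup\{\,i\ge 0 : \Ext^i_R(k,M)\ne 0\,\}$ for finitely generated $M$ over a local ring, where $k=R/\fm$. Because $k$ is finitely presented and $R\to\widehat{R}$ is flat, the base-change morphism
\[
\Ext^i_R(k,C)\otimes_R \widehat{R} \;\cong\; \Ext^i_{\widehat{R}}(k,\,C\otimes_R\widehat{R})
\]
is an isomorphism, and since the left-hand side is already a $k$-vector space (killed by $\fm$), tensoring with $\widehat{R}$ leaves it unchanged. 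Hence the sets of indices $i$ at which the $\Ext$ is nonzero coincide on both sides, and in particular the two injective dimensions agree.

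For (ii), $C/xC\cong C\otimes_R R/xR$ is semidualizing over $R/xR$ by Proposition 2.7(iii), and $x$ is $C$-regular because $\Ass_R(C)=\Ass_R(R)$ by Proposition 2.7(i). I would then invoke the classical change-of-rings identity
\[
\id_{R/xR}(C/xC)\;=\;\id_R(C)-1,
\]
which yields the desired equivalence of finiteness. Its derivation is standard: for any $R/xR$-module $N$, regularity of $x$ on $C$ gives a natural isomorphism $\Ext^{i+1}_R(N,C)\cong\Ext^i_{R/xR}(N,C/xC)$, and feeding $N=k$ into Bass' formula both transports finiteness and pins down the precise numerical drop by one.

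The main obstacle, such as it is, is the base-change step for $\Ext$ in (i): one must be careful that Bass' formula is being applied to modules to which it applies (finitely generated over a local ring, which both $C$ and $C\otimes_R\widehat{R}$ are), and that completion of a residue-field-vector space is indeed the identity. Once those bookkeeping points are in place, everything else is routine from the general principle that ``semidualizing with finite injective dimension'' is the intrinsic content of being dualizing.
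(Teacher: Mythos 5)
Your proof is correct and is precisely the expansion of the paper's one-line proof (``Just use the definition of dualizing modules''): you reduce dualizing to semidualizing plus finite injective dimension, transfer semiduality by the base-change facts of the paper (these are Proposition 2.5(iii) and (v), not 2.7), and transfer finiteness of $\id$ via Bass's characterization combined with flat base change of $\Ext$ in (i) and Rees's change-of-rings isomorphism in (ii). The only bookkeeping worth adding is that, under the paper's phrasing which states the definition for Cohen--Macaulay rings, CM-ness passes back and forth along both $R\to\widehat{R}$ and $R\to R/xR$ (the latter since $x$ is $R$-regular), and that $\id_R(C)=0$ cannot occur in (ii) because $\depth R\geq 1$, so the ``drop by one'' formula applies without exception.
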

\begin{proof}
Just use the definition of dualizing modules.
\end{proof}

\begin{thm}\label{SQ}
%2.8
Let $C$ be a semidualizing $R$-module and let $M$ be an $R$-module.

\begin{itemize}
           \item[(i)]{$C$-$\emph{\id}_R(M) = \emph{\id}_R (C \otimes_R M) $ and $\emph{\id}_R(M) =C$-$\emph{\id}_R(\emph{\Hom}_R(C,M))$}.
           \item[(ii)]{$C$-$\emph{\fd}_R(M) = \emph{\fd}_R (\emph{\Hom}_R(C,M))$ and $\emph{\fd}_R(M) =C$-$\emph{\fd}_R(C \otimes_R M) $}.
                 \end{itemize}
\end{thm}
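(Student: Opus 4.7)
The plan is to exploit the adjoint pair $(C \otimes_R -, \Hom_R(C,-))$, which induces equivalences $\mathcal{I}(R) \simeq \mathcal{I}_C(R)$ and $\mathcal{F}(R) \simeq \mathcal{F}_C(R)$ via the Hom- and tensor-evaluation isomorphisms of Remark 2.2. The semidualizing axioms on $C$ supply the $\Tor$- and $\Ext$-vanishings needed to push each equivalence through a (co)resolution.

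First I would establish two preparatory facts. For every injective $R$-module $I$, Hom-evaluation with $L = N = C$ gives $C \otimes_R \Hom_R(C,I) \cong \Hom_R(\Hom_R(C,C),I) \cong I$. Resolving $C$ by finitely generated projectives $P_\bullet$ (possible because $R$ is Noetherian and $C$ is finitely generated), applying Hom-evaluation termwise to identify $P_j \otimes_R \Hom_R(C,I)$ with $\Hom_R(\Hom_R(P_j,C),I)$, and passing to homology yields $\Tor^R_i(C,\Hom_R(C,I)) \cong \Hom_R(\Ext^i_R(C,C),I) = 0$ for $i \geq 1$. Dually, for every flat $R$-module $F$, tensor-evaluation yields $F \cong \Hom_R(C,C) \otimes_R F \cong \Hom_R(C, C \otimes_R F)$, and the parallel argument produces $\Ext^i_R(C, C \otimes_R F) \cong \Ext^i_R(C,C) \otimes_R F = 0$ for $i \geq 1$.

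For the first equality of (i), the inequality $\id_R(C \otimes_R M) \leq C$-$\id_R(M)$ is proved by taking a $C$-injective coresolution $0 \to M \to \Hom_R(C,I^0) \to \cdots \to \Hom_R(C,I^n) \to 0$ of length $n = C$-$\id_R(M)$, applying $C \otimes_R -$, and using the first preparatory fact to identify each tensored term with $I^j$. Splitting the coresolution into short exact sequences $0 \to K^j \to \Hom_R(C,I^j) \to K^{j+1} \to 0$ and running a downward induction on the syzygies $K^j$ via the $\Tor$-vanishing above shows that the tensored complex is exact, giving an injective coresolution of $C \otimes_R M$ of length at most $n$. The reverse inequality is symmetric: apply $\Hom_R(C,-)$ to an injective coresolution of $C \otimes_R M$, with the $\Ext$-vanishing $\Ext^i_R(C,I^j) = 0$ (immediate from injectivity of each $I^j$) controlling exactness and the unit $M \to \Hom_R(C, C \otimes_R M)$ providing the augmentation.

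The second equality of (i) is proved by the same template with the roles of the two functors swapped on the injective side, and both equalities of (ii) follow from the analogous arguments on the flat side, using the tensor-evaluation version of the preparatory facts in place of the Hom-evaluation version. The main obstacle I anticipate is carefully propagating the $\Tor$/$\Ext$-vanishings through the syzygies of a (co)resolution to guarantee exactness after applying the chosen functor; a secondary subtlety is verifying that the relevant unit or counit of the adjunction is an isomorphism on the augmentation term, which in the finite-dimensional case is forced by the dimension-shifting argument itself, while in the infinite-dimensional case the asserted equality holds vacuously.
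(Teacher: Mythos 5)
Your preparatory isomorphisms and the inequalities $\id_R(C\otimes_R M)\le C$-$\id_R(M)$ and $\fd_R(\Hom_R(C,M))\le C$-$\fd_R(M)$ are fine: the downward induction on the cosyzygies of a finite $\mathcal{I}_C$-coresolution, using $\Tor^R_{\ge 1}(C,\Hom_R(C,I))=0$, really does produce an injective coresolution of $C\otimes_R M$ of the same length, and dually on the flat side. But the remaining inequalities are where the theorem actually lives, and your sketch does not reach them. First, ``$\Ext^i_R(C,I^j)=0$ controlling exactness'' misidentifies the obstruction: when you apply the left-exact functor $\Hom_R(C,-)$ to an injective coresolution of $C\otimes_R M$, the cohomology of the resulting complex is $\Ext^j_R(C,C\otimes_R M)$, not anything measured by $\Ext^i_R(C,I^j)$. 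For an arbitrary $M$ these groups have no reason to vanish; their vanishing when $\id_R(C\otimes_R M)<\infty$ is exactly the nontrivial fact that modules of finite injective dimension lie in the Bass class $\mathcal{B}_C$, and the naive induction on injective dimension breaks at $\Ext^1$ (it needs a surjectivity of $\Hom_R(C,I)\to\Hom_R(C,N')$ that is not automatic). The same issue, in the form $\Ext^{\ge 1}_R(C,M)=0$ resp.\ $\Tor^R_{\ge 1}(C,M)=0$ for every $M$ of finite injective resp.\ flat dimension, blocks the directions $C$-$\id_R(\Hom_R(C,M))\le\id_R(M)$ and $C$-$\fd_R(C\otimes_R M)\le\fd_R(M)$, which you dismiss as ``the same template'': there the homology of the transformed resolution is $\Ext^j_R(C,M)$ resp.\ $\Tor^R_j(C,M)$, so exactness already requires Foxby-class membership of $M$.

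Second, and more seriously, the unit/counit being an isomorphism is not ``forced by the dimension-shifting argument itself''; it is the crux. To get $C$-$\id_R(M)\le\id_R(C\otimes_R M)$ you must know that the unit $M\to\Hom_R(C,C\otimes_R M)$ is bijective for an arbitrary $M$ with $\id_R(C\otimes_R M)<\infty$; knowing only that $C\otimes_R M\in\mathcal{B}_C$ hands you a $C$-injective coresolution of $\Hom_R(C,C\otimes_R M)$, not of $M$, and nothing in your construction compares the two modules (the analogous sticking point in (ii) is the counit $C\otimes_R\Hom_R(C,M)\to M$). Proving that finiteness of one side forces $M$ itself into the Auslander class $\mathcal{A}_C$ (or $\Hom_R(C,M)$-type statements into $\mathcal{B}_C$) is precisely the content of the results the paper quotes instead of reproving: \cite[Theorem 2.11]{TW} for (i) and \cite[Proposition 5.2]{STWY} for (ii). So your argument, as written, establishes one half of each equality; to complete it you would need the Auslander/Bass class machinery (finite injective dimension implies $\mathcal{B}_C$, finite flat dimension implies $\mathcal{A}_C$, the two-of-three property along short exact sequences, and the argument that finiteness of the relative dimension pulls $M$ into the appropriate class), none of which follows from the evaluation isomorphisms alone.
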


\begin{proof}
For (i), see \cite[Theorem 2.11]{TW} and for (ii), see \cite[Proposition 5.2]{STWY}.
\end{proof}

\begin{defn}\label{A1}
\emph{An $R$-module $M$ is called a $(C,\fp)$-\textit{injective} $R$-module, if it is isomorphic to an $R$-module of the form $\Hom_R(C , E(R/ \fp))$, where $\fp$ is a prime ideal of $R$.}
\end{defn}

It is well-known that over a Noetherian ring, any injective module is isomorphic to a direct sum of indecomposable injective modules. It follows that, if $R$ is Noetherian, then any $C$-injective $R$-module $M$ is isomorphic to a direct sum of $(C,\fp)$-injective $R$-modules where $\fp \in \Ass_R(M)$. Note that, in this case, any direct sum of $C$-injective $R$-modules is again $C$-injective.

\begin{rem}\label{A1}
\emph{Let $M$ be a finitely generated $R$-module. There are isomorphisms\\
 \[\begin{array}{rl}
  \Hom_R(M,E(R/ \fp)) &\cong \Hom_R(M,E(R/ \fp) \otimes_R R_{\fp})\\
  &\cong \Hom_R(M,E(R/ \fp)) \otimes_R R_{\fp}\\
  &\cong \Hom_{R_{\fp}} (M_{\fp} , E_{R_{\fp}}(R_{\fp}/ \fp R_{\fp})),\\
  \end{array}\]
where the the first isomorphism holds because $E(R/ \fp) \cong E_{R_{\fp}}(R_{\fp}/ \fp R_{\fp})$, and the second isomorphism is tensor-evaluation. In particular, $\Hom_R(M,E(R/ \fp))$ is an Artinian $R_{\fp}$-module. Also, if we set $N= \Hom_R(M,E(R/ \fp))$, then the isomorphism $N \cong N_{\fp}$  yields another isomorphism\\
\centerline{$\Tor^R_i(N , N) \cong \Tor^{R_{\fp}}_i(N_{\fp} , N_{\fp}), $}
for all $ i \geq 0$.}
\end{rem}

%++++++++++++++++++++++++++++++++++++++++++++++++++++++++++++++++++++++++++++++++++++++++++++++++++++
\section{main results}
%----------------------------------------------------------------------

Throughout this section, $C$ is a semidualizing $R$-module. We begin with two lemmas.
\begin{lem}\label{A2}
%3.1
Let $\fp$ be a prime ideal of $R$ and let $M$ be a $(C, \fp)$-injective $R$-module. Then $M \otimes_R M \neq 0$ if and only if $\emph\depth_{R_{\fp}} (C_{\fp}) = 0$.
\end{lem}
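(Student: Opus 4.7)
The plan is to localize at $\fp$ so that everything takes place over the local ring $S := R_{\fp}$ with $D := C_{\fp}$ a semidualizing $S$-module (Proposition 2.5(v)) and $E := E_S(S/\fn)$, where $\fn := \fp R_{\fp}$. By Remark 2.9, $M \cong \Hom_S(D,E)$ and $M \otimes_R M \cong M \otimes_S M$; and by Proposition 2.5(iv), $\depth_S(D) = \depth(S)$, so the target condition $\depth_{R_{\fp}}(C_{\fp}) = 0$ becomes simply $\depth(S) = 0$.

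The first key identity is $D \otimes_S \Hom_S(D,E) \cong E$, which follows from Hom-evaluation (Remark 2.2) applied with $L = N = D$ finitely generated and $M = E$ injective, together with $\Hom_S(D,D) \cong S$. Combined with two applications of Proposition 2.5(vi) and associativity of tensor products, this yields
\[
M \otimes_S M \neq 0 \iff M \otimes_S E \neq 0 \iff E \otimes_S E \neq 0,
\]
so the lemma reduces to the $C$-free claim: $E \otimes_S E \neq 0$ if and only if $\depth(S) = 0$.

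For $\depth(S) = 0 \Rightarrow E \otimes_S E \neq 0$, the hypothesis $\fn \in \Ass(S)$ yields an embedding $S/\fn \hookrightarrow S$, which $\Hom_S(-,E)$ dualizes (by injectivity of $E$) to a surjection $E \twoheadrightarrow S/\fn$; tensoring two copies then produces a surjection $E \otimes_S E \twoheadrightarrow S/\fn \otimes_S S/\fn \cong S/\fn \neq 0$.

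The main obstacle is the reverse direction. The idea is to fix an $S$-regular element $x \in \fn$ (available because $\depth(S) > 0$); applying $\Hom_S(-,E)$ to $0 \to S \xrightarrow{x} S$ then forces $xE = E$ by injectivity of $E$. Since $E$ is $\fn$-torsion, write $E = \varinjlim_n E_n$ with $E_n := (0 :_E \fn^n)$. Each $E_n$ is annihilated by $x^n$, so $x^n$ kills $E_n \otimes_S E$ through the left factor; at the same time the identity $x^n E = E$ makes $x^n$ act surjectively on $E_n \otimes_S E$ through the right factor. The combination forces $E_n \otimes_S E = 0$ for every $n$, and hence $E \otimes_S E = \varinjlim (E_n \otimes_S E) = 0$.
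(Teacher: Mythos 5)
Your argument is correct. The reduction is essentially the paper's: you use the Hom-evaluation isomorphism $C_{\fp}\otimes_{R_\fp}\Hom_{R_\fp}(C_\fp,E)\cong E$ together with Proposition 2.5(vi) (applied twice, via associativity) to show that $M\otimes M\neq 0$ is equivalent to $E(R/\fp)\otimes E(R/\fp)\neq 0$, which is exactly how the paper passes between the $C$-injective and the injective situation; your preliminary localization through Remark 2.11 and the identification $\depth_{R_\fp}(C_\fp)=\depth(R_\fp)$ from Proposition 2.5(iv) are also implicit there. Where you genuinely diverge is at the crux: the paper simply cites \cite[Lemma 2.2]{EJ2} for the statement that $E(R/\fp)\otimes_R E(R/\fp)\neq 0$ if and only if $\depth(R_\fp)=0$, while you prove it from scratch --- in depth zero, dualizing an embedding $S/\fn\hookrightarrow S$ to a surjection $E\to S/\fn$ and tensoring to hit $S/\fn\otimes S/\fn\neq 0$; in positive depth, playing the $x$-divisibility of $E$ (from injectivity and an $S$-regular $x\in\fn$) against the $\fn$-torsionness of $E$, writing $E=\varinjlim(0:_E\fn^n)$ so each piece of the tensor product is both killed by and divisible by a power of $x$, hence zero. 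This second argument is sound (it is in fact the same divisible-versus-locally-nilpotent device the paper itself uses later inside the proof of Theorem 3.3 for the vanishing of $E(R/\fp)\otimes E(R/\fp)$ when $\fp\notin\Ass(R)$). The paper's route is shorter because it delegates the key fact to Enochs--Jenda; yours is self-contained and makes visible exactly why depth zero is the dividing line.
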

\begin{proof}
First assume that $\depth_{R_{\fp}} (C_{\fp}) = 0$. Then $\depth (R_{\fp}) = 0 $ and so by \cite[Lemma 2.2]{EJ2}, we have $ E(R/ \fp) \otimes_R E(R/ \fp) \neq 0 $. Now using the Hom-evaluation isomorphism and Proposition 2.5(vi), one can see that $M \otimes_R M \neq 0$. Conversely, if $M \otimes_R M \neq 0$ then another use of Hom-evaluation isomorphism and Proposition 2.5(vi), show that $ E(R/ \fp) \otimes_R E(R/ \fp) \neq 0 $. Therefore in view of \cite[Lemma 2.2]{EJ2}, we obtain $\depth_{R_{\fp}} (C_{\fp}) = \depth R_{\fp} = 0$, as wanted.
\end{proof}

In \cite{K}, Kubik introduced the dual notion of semidualizing modules. Over a Noetherian local ring $(R, \fm)$, an Artinian $R$-module $T$ is called a quasidualizing $R$-module, if the natural homothety map $\widehat{R} \rightarrow \Hom_R(T,T)$ is an isomorphism and $ \Ext^i_R(T,T)=0 $ for all $ i>0 $. She proved that a quasidualizing module is a cogenerator, that is, if $\Hom_R(L,T) = 0$, for an $R$-module $L$, then $L=0$. She then asked \cite[Question 3.12]{K}, does $ T \otimes_R L = 0$ imply $L=0$? Lemma 3.1 provides a counterexample. Let $(R, \fm)$ be complete local with $\depth(R) > 0$. Then one checks easily that $\Hom_R(C, E(R/ \fm))$ is quasidualizing. Now by lemma 3.1, we have $\Hom_R(C, E(R/ \fm)) \otimes_R \Hom_R(C, E(R/ \fm)) = 0$.

%------------------------------------------------------------------------------------------

\begin{lem}\label{A2}
%3.2
Let $\fp$ be a prime ideal of $R$ and $M$ be a $(C,\fp)$-injective $R$-module. Then $M \otimes_R M $ is a non-zero $C$-injective $R$-module if and only if
$ C_{\fp} $ is an injective $ R_{\fp} $-module.
\end{lem}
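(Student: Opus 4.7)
The plan is to work over the local ring $S := R_\fp$, using Matlis-type arguments and the indecomposability of $C_\fp$ (Proposition~2.4(ii)) to force $C_\fp$ itself to be injective. Via Remark~2.10, identify $M$ with $\Hom_S(D, E)$, where $D := C_\fp$, $\fn := \fp S$, and $E := E_S(S/\fn)$; thus $M$ is the Matlis dual of $D$, and $M \otimes_R M = M \otimes_S M$. Write $k := S/\fn$.

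For the backward direction, suppose $D$ is injective. Proposition~2.4(ii) says $D$ is indecomposable, and an indecomposable, finitely generated, injective module over the local ring $S$ must be $E_S(S/\fq)$ for some prime $\fq$; finite generation forces $\fq = \fn$, so $D \cong E$ and $S$ is Artinian. Then $M = \Hom_S(E, E) = S$, and $M \otimes_S M = S = \Hom_S(D, D)$ is nonzero and $D$-injective.

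For the forward direction, assume $M \otimes_S M \cong \Hom_S(D, J)$ is nonzero with $J$ injective, and let $\nu$ denote the $k$-dimension of $\mathrm{soc}(D) = (0:_D\fn)$. Lemma~3.1 gives $\depth D = 0$, so $\nu \ge 1$. Hom-evaluation (Remark~2.2(ii); $D$ finitely generated, target injective) produces $J \cong D \otimes_S (M \otimes_S M)$ and $D \otimes_S M \cong E$, whence $J \cong E \otimes_S M$. Being $\fn$-torsion and injective, $E \otimes_S M \cong E^{(\alpha)}$ for some cardinal $\alpha$, and hence $M \otimes_S M \cong \Hom_S(D, E^{(\alpha)}) \cong M^{(\alpha)}$. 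A socle count---computing $(M \otimes_S M)/\fn(M \otimes_S M)$ directly as $M \otimes_S k^\nu \cong k^{\nu^2}$ (using Hom-evaluation to see $M/\fn M \cong k^\nu$), and also as $M^{(\alpha)}/\fn \cong k^{\nu\alpha}$---forces $\alpha = \nu$. Applying $\Hom_S(-, E)$ to $E \otimes_S M \cong E^\nu$ and using Matlis duality $\Hom_S(E, E) = \widehat{S}$ yields $\Hom_S(E, D) \cong \widehat{S}^{\,\nu}$.

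To finish, compare with the injective hull $I_0(D) \cong E^\nu \oplus X$, where $X$ collects the non-$E$ indecomposable injective summands. Since any $y \in \fn \setminus \fq$ acts invertibly on $E_S(S/\fq)$ but locally nilpotently on $E$, $\Hom_S(E, E_S(S/\fq)) = 0$ for $\fq \ne \fn$, so $\Hom_S(E, I_0(D)) \cong \widehat{S}^{\,\nu}$. The canonical inclusion $\Hom_S(E, D) \hookrightarrow \Hom_S(E, I_0(D))$ is thus an isomorphism of $\widehat{S}^{\,\nu}$'s, whence each of the $\nu$ canonical inclusions $E \hookrightarrow I_0(D)$ (into the $E$-summands) factors through $D$; consequently $E^\nu \subseteq D \subseteq I_0(D)$. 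Splitting off the injective $E^\nu$ as $D = E^\nu \oplus D'$ and invoking indecomposability of $D$ (Proposition~2.4(ii)) forces $D' = 0$ and $\nu = 1$, so $D \cong E$ is injective. The main obstacle is the step turning the $\Hom$-identification into the submodule containment $E^\nu \subseteq D$ inside $I_0(D)$; once this is in hand, indecomposability closes the argument.
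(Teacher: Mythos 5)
Your backward direction is fine (up to the routine remark that a $C_\fp$-injective $R_\fp$-module is $C$-injective over $R$, via $\Hom_{R_\fp}(C_\fp,J)\cong\Hom_R(C,J)$ and the fact that an injective $R_\fp$-module is injective over $R$), and most of the forward direction is sound and genuinely different from the paper's argument. But the decisive step is a non sequitur: from the fact that $\Hom_S(E,D)$ and $\Hom_S(E,E_S(D))$ are both \emph{abstractly} isomorphic to $\widehat{S}^{\,\nu}$ you conclude that the inclusion-induced injection between them ``is thus an isomorphism.'' An injective map between two modules that happen to be isomorphic to one another need not be surjective (multiplication by a non-unit non-zerodivisor on $\widehat{S}$ already shows this), so nothing you have written forces any copy of $E$ inside $E_S(D)$ to land in $D$; this is precisely the step you yourself flag as ``the main obstacle,'' and as written it is simply asserted. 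The step can be rescued, but only by invoking an input you never use at this point: $\depth S=0$ (Lemma 3.1) gives $\depth\widehat{S}=0$, so every non-unit of $\widehat{S}$ is a zerodivisor; the comparison map is $\widehat{S}$-linear (all identifications are compatible with the $\Hom_S(E,E)\cong\widehat{S}$-action by precomposition), hence is given by a $\nu\times\nu$ matrix over $\widehat{S}$, and McCoy's theorem then says injectivity forces the determinant to be a non-zerodivisor, hence a unit, hence the map is bijective; only then does $E^{\nu}\subseteq D$ follow and indecomposability finish. Without an argument of this kind the proof does not close. A smaller imprecision in the same stretch: dualizing $E\otimes_S M\cong E^{\nu}$ gives $\Hom_S(E,\Hom_S(M,E))\cong\widehat{S}^{\,\nu}$, and $\Hom_S(M,E)\cong D\otimes_S\widehat{S}$, not $D$, since $D$ need not be Matlis reflexive over the non-complete $S$; your conclusion $\Hom_S(E,D)\cong\widehat{S}^{\,\nu}$ survives because every map from the $\fn$-torsion module $E$ lands in the (finite length) $\fn$-torsion submodule, which is unchanged by completion, but this needs saying.

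For comparison, the paper's proof of the forward direction avoids the injective-hull comparison entirely: it first shows $M\otimes_R M$ has finite length (using that $M$ is Artinian over $R_\fp$ together with a result of Kubik--Leamer--Sather-Wagstaff), deduces that its Matlis dual is a finite length flat module, hence that $R_\fp$ is Artinian, and then, over the Artinian ring, splits a copy of $R$ off $C^{\vee}$ (equivalently a copy of $E$ off $C$) using freeness of $\Hom_R(C^{\vee},R)$ and the socle-degree element $\fm^{\,n-1}\ne 0$, finishing with indecomposability. Your socle/Bass-number counting route is attractive because it never reduces to the Artinian case, but until the surjectivity step above is justified (e.g.\ by the depth-zero determinant argument), it has a genuine gap exactly at its crux.
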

\begin{proof}
First assume that $ C_{\fp} $ is an injective $ R_{\fp} $-module. Then we have $ \dim (R_{\fp}) = \dim_{R_{\fp}} (C_{\fp}) = 0$ and hence $R_{\fp}$ is Artinian. Now lemma 3.1 shows that $M \otimes_R M \neq 0$. Next we can write $ C_{\fp} \cong E_{R_{\fp}}(R_{\fp}/ \fp R_{\fp})^{\mu^0( \fp , C)}$ . But Proposition 2.5(v) shows that $ C_{\fp}$ is semidualizing for $ R_{\fp}$ and then by Proposition 2.5(ii) we must have $\mu^0( \fp , C) = 1$, that is, $ C_{\fp} \cong E_{R_{\fp}}(R_{\fp}/ \fp R_{\fp})$. By Remark 2.11, we have the $R_{\fp}$-module isomorphisms \\
\centerline{$ \Hom_R(C,E(R/ \fp)) \cong \Hom_{R_{\fp}} (C_{\fp} , E_{R_{\fp}}(R_{\fp}/ \fp R_{\fp}) \cong \widehat{R_{\fp}} = R_{\fp} $,}
and the later is a flat $R$-module. Therefore using the tensor-evaluation isomorphism, we have the following isomorphisms

 \[\begin{array}{rl}
  M \otimes_R M &\cong \Hom_R(C,E(R/ \fp)) \otimes_R \Hom_R(C,E(R/ \fp))\\
  &\cong\Hom_R(C,E(R/ \fp)) \otimes_R R_{\fp}\\
  &\cong\Hom_R(C,E(R/ \fp)\otimes_R R_{\fp}),\\
  \end{array}\]
and the later module is $C$-injective.\\

 Conversely, assume that $M \otimes_R M $ is a non-zero $C$-injective $R$-module.
 Then $C \otimes_R ( M \otimes_R M) \cong E(R/ \fp) \otimes_R \Hom_R(C,E(R/ \fp))$ is a non-zero injective $R$-module by Theorem 2.9(i). On the other hand, by Remark 2.11, $\Hom_R(C,E(R/ \fp))$ is an Artinian $R_{\fp}$-module. Therefore $ C  \otimes_R M \otimes_R M $ is of finite length by \cite[Corollary 7.4]{KLW}. Thus by using the tensor-evaluation isomorphism, one can see that $E(R/ \fp) \otimes_R \Hom_R(C,E(R/ \fp))$ is a finite length injective $R_{\fp}$-module and so  \\
 \centerline{$\Hom_{R_{\fp}}(E(R/ \fp) \otimes_R \Hom_R(C,E(R/ \fp)) , E(R/ \fp)) \cong \Hom_{R_{\fp}}(\Hom_R(C,E(R/ \fp)), \widehat{R_{\fp}} ) $}
is a finite length flat $R_{\fp}$-module. It follows that $ \dim (R_{\fp}) = 0$ and then $ \widehat{R_{\fp}} = R_{\fp}$ is Artinian. Therefore, we can replace $R$ by $R_{\fp}$ and assume that $ (R, \fm)$ is an Artinian local ring. We want to show that $C$ is an injective $R$-module.
  Set $ (-)^{\vee} = \Hom_R(-, E(R/ \fm))$. Let $n$ be a positive integer for which  $ \fm^n = 0 $ and $ \fm^{n-1} \neq 0$. Note that, with the new notations, 
  $\Hom_R( C^{\vee} , R)$ is a flat $R$-module of finite length. Hence, $\Hom_R( C^{\vee} , R)$  must be free, and therefore
   $ \fm^{n-1} \Hom_R( C^{\vee} , R) \neq 0$. Now if $ \theta (C^{\vee}) \subseteq \fm$ for
   each $ \theta \in \Hom_R( C^{\vee} , R)$, then $ \fm^{n-1} \Hom_R( C^{\vee} , R) = 0$ which is impossible. Consequently, there exists an element
   $ \theta \in \Hom_R( C^{\vee} , R)$ such that $ \theta (C^{\vee}) \nsubseteq \fm $. Hence $ \theta $ is onto and so is split. Thus we conclude that
   $R$ is a direct summand of $ C^{\vee} $. Write $ C^{\vee} = R \oplus K $. Taking Matlis dual, yields $ C \cong E(R / \fm) \oplus K^{\vee}$, and the indecomposablity of $C$ implies that $ K^{\vee} = 0$, whence $ K=0$ and $ C \cong E(R/ \fm)$. Hence $C$ is injective, as wanted.
\end{proof}
%%%%%%%%%%%%%%%%%%%%%%%%%%%%%%%%%%%%%%%%%%%%%%%%%%%%%%%%%%%%%%%%%%%%%%%%%%%%%
Now we are ready to prove one of our main results. The following theorem generalizes \cite[Theorem 3]{CE}.

\begin{thm}\label{SQ}
%3.3
The following are equivalent:

\begin{itemize}
           \item[(i)]{$ E(C)$ is $C$-flat.}
           \item[(ii)] { $C$-$\emph{\fd}_R (E(C)) < \infty $. }
            \item[(iii)]{ $ C_{\fp}$ is an injective $ R_{\fp}$--module for all $ \fp \in \emph{Ass}_R(C) $.}
             \item[(iv)]{ $ E(M)$ is $C$-flat for all $C$-flat $R$-modules $M$.}
             \item[(v)]{ $ E(M)$ is $C$-flat for all \emph{G}$_C$-flat $R$-modules $M$.}
             \item[(vi)]{ $F(M)$ is $C$-injective for all $C$-injective $R$-modules $M$.}
             \item[(vii)]{ $F(M)$ is $C$-injective for all \emph{G}$_C$-injective $R$-modules $M$.}
             \item[(viii)]{ $ N \otimes_R N^{\prime}$ is $C$-injective for all $C$-injective $R$-modules $N$ and $N^{\prime}$.}
             \item[(ix)]{ $ S^{-1} C$ is an injective $R$-module, where $S= \emph{Nzd}_R(C) $.}
                 \end{itemize}
\end{thm}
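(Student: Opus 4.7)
The plan is to establish the nine-fold equivalence through a network of implications centered on the local criterion (iii). Several implications are routine and can be dispatched immediately: (i) $\Rightarrow$ (ii) is trivial; (iv) $\Rightarrow$ (i) follows by specializing to the $C$-flat module $M = C$; (v) $\Rightarrow$ (iv) and (vii) $\Rightarrow$ (vi) hold since every $C$-flat (resp.\ $C$-injective) module is G$_C$-flat (resp.\ G$_C$-injective). The substantive work splits into the core local equivalence (i) $\Leftrightarrow$ (iii) $\Leftrightarrow$ (viii), the finiteness criterion (ii) $\Rightarrow$ (i), the upgrades from (i) to (iv)--(vii), and the saturation equivalence (i) $\Leftrightarrow$ (ix).

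For (i) $\Leftrightarrow$ (iii): decompose $E(C) = \bigoplus_{\fp \in \Ass(R)} E(R/\fp)^{\mu^0(\fp,C)}$ using $\Ass(C) = \Ass(R)$ from Proposition 2.4(i), and apply Theorem 2.9(ii) to translate $C$-flatness of $E(C)$ into flatness of $\Hom_R(C, E(C)) = \bigoplus_{\fp \in \Ass(R)} \Hom_R(C, E(R/\fp))^{\mu^0(\fp,C)}$. By Remark 2.11 each summand is an Artinian $R_{\fp}$-module, and $R$-flatness coincides with $R_{\fp}$-flatness. Under (i), an Artinian flat module over a Noetherian local ring forces $R_{\fp}$ to be Artinian; the Matlis-duality argument at the end of Lemma 3.2, combined with the indecomposability of Proposition 2.5(ii), then yields $C_{\fp} \cong E_{R_{\fp}}(R_{\fp}/\fp R_{\fp})$, i.e., (iii). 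Conversely, under (iii), $C_{\fp}$ is injective and semidualizing over $R_{\fp}$, which forces $R_{\fp}$ to be 0-dimensional (hence Artinian) and then Matlis duality yields $\Hom_R(C, E(R/\fp)) \cong R_{\fp}$, a flat $R$-module.

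For (iii) $\Leftrightarrow$ (viii), (ii) $\Rightarrow$ (i), (i) $\Leftrightarrow$ (ix), and the upgrades: the direction (viii) $\Rightarrow$ (iii) is obtained by applying (viii) to $N = \Hom_R(C, E(R/\fp))$ for each $\fp \in \Ass(R)$: $N \otimes_R N \neq 0$ by Lemma 3.1 and Lemma 3.2 then gives (iii). For (iii) $\Rightarrow$ (viii), decompose arbitrary $C$-injective modules into $(C,\fp)$-injective pieces; cross terms $\Hom_R(C, E(R/\fp)) \otimes_R \Hom_R(C, E(R/\fq))$ with $\fp \neq \fq$ vanish by an invertible-action argument (tensoring with $C$ reduces the question to $E(R/\fp) \otimes_R \Hom_R(C, E(R/\fq))$, where any element in one prime but not the other kills the tensor), while same-prime terms are handled by Lemma 3.1 when $\fp \notin \Ass(R)$ and by Lemma 3.2 under (iii) when $\fp \in \Ass(R)$. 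For (ii) $\Rightarrow$ (i), the Chouinard-type formula for finite flat dimension applied to the Artinian $R_{\fp}$-module $\Hom_R(C, E(R/\fp))$ with $\fp \in \Ass(R)$ (so $\depth R_{\fp} = 0$) gives $\fd = 0$, hence each summand is flat and $E(C)$ is $C$-flat. For (i) $\Leftrightarrow$ (ix): since $\Ass(C) = \Ass(R)$, $S = \text{Nzd}_R(C) = \text{Nzd}_R(R) = R \setminus \bigcup_{\fp \in \Ass(R)} \fp$, so $S^{-1}R$ is semi-local with maximal ideals indexed by $\Ass(R)$, and $S^{-1}C$ is injective iff each $C_{\fp}$ is injective, matching (iii). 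The upgrades (i) $\Rightarrow$ (iv) and (i) $\Rightarrow$ (vi) follow by decomposing $E(M)$ or $F(M)$ into summands indexed by $\Ass(M)$; for the G$_C$-refinements (v) and (vii), I would invoke the complete $\mathcal{F}\mathcal{F}_C$- or $\mathcal{I}_C\mathcal{I}$-resolutions of Definition 2.6 to reduce to the $C$-flat/$C$-injective case.

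The main obstacle I foresee is twofold: first, in (ii) $\Rightarrow$ (i), applying the Chouinard-type flat-dimension formula to Artinian $R_{\fp}$-modules and verifying that $R$-flatness and $R_{\fp}$-flatness coincide requires careful handling; second, extending (i) $\Rightarrow$ (iv) to the G$_C$-refinements (i) $\Rightarrow$ (v) and (i) $\Rightarrow$ (vii) demands unwinding the complete $\mathcal{F}\mathcal{F}_C$- and $\mathcal{I}_C\mathcal{I}$-resolutions more delicately, since one cannot directly exploit an envelope/cover formula and must track the $C$-flat/$C$-injective property across the full resolution rather than at a single stage.
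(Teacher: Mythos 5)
Most of your implication graph is workable, and in one place it is cleaner than the paper's: in (iii)$\Rightarrow$(viii) you dispose of the same-prime terms with $\fp\notin\Ass(R)$ directly from Lemma 3.1, since $\depth_{R_{\fp}}C_{\fp}=\depth R_{\fp}>0$, whereas the paper manufactures a regular element in $\fp$; your (ii)$\Rightarrow$(i), (viii)$\Rightarrow$(iii), (i)$\Leftrightarrow$(ix) and (i)$\Rightarrow$(iv)/(v) legs are all in the spirit of the paper's (ii)$\Rightarrow$(iii), (viii)$\Rightarrow$(iii), (iii)$\Rightarrow$(ix)$\Rightarrow$(iv)$\Rightarrow$(v). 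The genuine gap is in the flat-cover statements (vi) and (vii). Your route to (vi), ``decompose $F(M)$ into summands indexed by $\Ass(M)$,'' fails as stated: flat covers of $C$-injective modules are not direct sums indexed by associated primes (by Enochs' structure theory they are products of completions of free modules over various localizations), and determining which local factors occur and why the resulting product is $C$-injective is precisely the work that has to be done. The paper bypasses this with a cover-theoretic argument for (iv)$\Rightarrow$(vi): if $\theta\colon F\to\Hom_R(C,I)$ is a flat cover, then $\Id_C\otimes\theta\colon C\otimes_RF\to I$ is an $\mathcal{F}_C$-cover of $I$; comparing it with the inclusion $C\otimes_RF\hookrightarrow E(C\otimes_RF)$, which is $C$-flat by (iv), exhibits $C\otimes_RF$ as a direct summand of $E(C\otimes_RF)$, hence injective, whence $F\cong\Hom_R(C,C\otimes_RF)$ is $C$-injective by Theorem 2.9(i). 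For (vii) you explicitly defer, and no reduction ``through the coresolution'' is actually offered: the paper proves (vi)$\Rightarrow$(vii) by writing a G$_C$-injective $M$ as a homomorphic image of a $C$-injective module $L$ and using the cover property to realize $F(M)$ as a direct summand of $F(L)$, and then closes the cycle with (vii)$\Rightarrow$(v) via the trivial extension $R\ltimes C$, \cite[Theorem 2.16]{HJ} and Matlis duality --- a mechanism absent from your plan. As written, (vi) and (vii) are not established, so the nine-fold equivalence is incomplete.

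A second, smaller gap: in (i)$\Rightarrow$(iii) you assert that a nonzero Artinian flat module over a Noetherian local ring forces the ring to be Artinian. This is true but not a quotable triviality, and you give no argument; one proof notes that the Matlis dual of such a module is an injective module that is finitely generated over the completion, that any summand $E(R_{\fp}/\fq R_{\fp})$ with $\fq R_{\fp}$ non-maximal lies in $\bigcap_k s^kN^{\vee}=0$ for suitable $s$ by Krull intersection, so the dual is a finitely generated sum of copies of $E_{R_{\fp}}(R_{\fp}/\fp R_{\fp})$, which forces $R_{\fp}$ Artinian. It is simpler to argue as the paper does in (ii)$\Rightarrow$(iii): finite flat dimension of $\Hom_{R_{\fp}}(C_{\fp},E_{R_{\fp}}(R_{\fp}/\fp R_{\fp}))$ gives $\id_{R_{\fp}}(C_{\fp})<\infty$, and then $\id_{R_{\fp}}(C_{\fp})=\depth(R_{\fp})=0$ by \cite[Corollary 9.2.17]{EJ1}, which yields (iii) with no new lemma. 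Finally, in (i)$\Rightarrow$(iv) you should record why only primes of $\Ass(R)$ occur in $E(M)$, i.e.\ $\Ass(C\otimes_RF)\subseteq\Ass(C)$ for $F$ flat; this is standard but should be cited or proved.
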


\begin{proof}
   (i)$\Longrightarrow$(ii). It is evident.\

(ii)$\Longrightarrow$(iii). We can write $E(C) = \displaystyle  \oplus _{ \fp \in \emph{\Ass}_R(C)} E(R/ \fp)^{\mu^0( \fp , C)}$. Now, in view of Theorem 2.9(ii), we have
$ \fd_R (\Hom_R(C,E(C)) < \infty $, and thus \\
  \centerline{$ \fd_{R_{\fp}} (\Hom_{R_{\fp}}(C_{\fp} , E_{R_{\fp}}(R_{\fp}/ \fp R_{\fp})) = \fd_R(\Hom_R(C,E(R/ \fp))) < \infty,$}
  for all $ \fp \in \Ass_R(C)$.
Hence $ \id_{R_{\fp}}(C_{\fp}) < \infty $, and so we have $ \id_{R_{\fp}}(C_{\fp}) = \depth (R_{\fp}) = 0$ by \cite[Corollary 9.2.17]{EJ1}. Therefore $C_{\fp}$ is an injective $ R_{\fp}$-module for all $ \fp \in \Ass_R(C) $.

(i)$\Longrightarrow$(viii). Let $N$ and $N^{\prime}$ be $C$-injective $R$-modules. Let $\fp$ and $\fq$ be two distinct prime ideals of $R$. Then either
$ \fp \nsubseteq \fq$ or $ \fq \nsubseteq \fp$ . Suppose that $ \fp \nsubseteq \fq$. Set $M = \Hom_R(C,E(R/ \fp))$ and $N = \Hom_R(C,E(R/ \fq))$. Then we have
 \[\begin{array}{rl}
  M \otimes_R N &\cong M \otimes_{R_{\fp}} R_{\fp} \otimes_R N \\
  &\cong M \otimes_{R_{\fp}} N_{\fp}\\
  &= 0,\\
  \end{array}\]
since $ E(R/ \fq)_{\fp} = 0$. Next we prove
that if $ \fp \notin \Ass_R(C)$, then \\
 \centerline{ $ \Hom_R(C,E(R/ \fp)) \otimes_R \Hom_R(C,E(R/ \fp)) = 0$.}
 By Proposition 2.5(vi) and Remark 2.2, $ \Hom_R(C,E(R/ \fp)) \otimes_R \Hom_R(C,E(R/ \fp)) = 0$ if and only if 
 $ E(R/ \fp) \otimes_R \Hom_R(C,E(R/ \fp)) \cong C \otimes_R \Hom_R(C,E(R/ \fp)) \otimes_R \Hom_R(C,E(R/ \fp)) = 0 $, and 
 $ E(R/ \fp) \otimes_R \Hom_R(C,E(R/ \fp)) = 0 $ if and only if 
 $ E(R/ \fp) \otimes_R E(R/ \fp) \cong C \otimes_R E(R/ \fp) \otimes_R \Hom_R(C,E(R/ \fp)) = 0 $.
   Therefore, in view of Proposition 2.5(i), we need only to prove that if $ \fp \notin \Ass_R(R)$,
 then $ E(R/ \fp) \otimes_R E(R/ \fp) = 0$. We claim that if $\fp \notin \Ass(R)$, then there exists an element $ r \in \fp$ which is $R$-regular. For otherwise, the prime avoidance theorem implies that $ \fp \subseteq \fq$ for some  $\fq \in \Ass(R)$. But as we have seen in  (ii)$\Longrightarrow$(iii), in this case
 $ R_{\fq}$ is an Artinian ring and then  $ \fp = \fq \in \Ass(R) $, which is impossible.
 Next observe that the exact sequence $ 0 \rightarrow R \overset{r} \rightarrow R$ implies an
 epimorphism $ E(R/ \fp) \overset{r} \rightarrow E(R/ \fp) \rightarrow 0$, which is locally nilpotent. Now it is easy to
  see that $ E(R/ \fp) \otimes_R E(R/ \fp) = 0$. Therefore in order to prove that $ N \otimes_R N^{\prime}$ is $C$-injective, it is enough to
  prove that $ \Hom_R(C,E(R/ \fp)) \otimes_R \Hom_R(C,E(R/ \fp)) $ is $C$-injective for all $ \fp \in \Ass_R(C) $. The assumption together with Theorem 2.9(ii), show that
  $ \Hom_R(C,E(R/ \fp))$ is flat for all $ \fp \in \Ass_R(C) $. Hence $ E(R/ \fp) \otimes_R \Hom_R(C,E(R/ \fp))$ is injective and thus using the tensor-evaluation isomorphism, we see that\\
    \centerline{$ \Hom_R(C,E(R/ \fp)) \otimes_R \Hom_R(C,E(R/ \fp)) \cong \Hom_R\big(C,E(R/ \fp) \otimes_R \Hom_R(C,E(R/ \fp))\big) $}
is $C$-injective. Finally, Since any direct sum of  $C$-injective $R$-modules is $C$-injective, we conclude that $ N \otimes_R N^{\prime}$ is $C$-injective.

 (iv)$\Longrightarrow$(vi). Suppose that $I$ is an injective $R$-module, and $ \theta : F \rightarrow \Hom_R(C,I) $ is flat cover. Assume that 
 $ F' $ is a flat $R$-module. Consider the composition \\
 \centerline{$ \xi: \Hom_R(F',F) \overset{\omega^\ast_F}\longrightarrow \Hom_R(F' , \Hom_R(C , C \otimes_R F)) \overset{\cong}\longrightarrow \Hom_R(C \otimes_R F' , C \otimes_R F) $,}
in which $ \omega^\ast_F = \Hom_R(F', \omega_F) $ and $ \omega_F : F \rightarrow \Hom_R(C , C \otimes_R F) $ is the tensor-evaluation isomorphism. This isomorphism is such that $ \xi(\alpha) = \Id_C \otimes \alpha $ for all $ \alpha \in \Hom_R(F',F) $.  Now it is easy to check that $ \Id_C \otimes \theta : C \otimes_R F \rightarrow C \otimes_R \Hom_R(C,I) \cong I $ is $ \mathcal{F}_C $-cover of $I$. But since $I$ is injective, there exists a homomorphism $ E(C \otimes_R F) \rightarrow I $ making the
following diagram
  \begin{displaymath}
  \xymatrix{
  	0 \ar[r]	&  C \otimes_R F \ar@{^{(}->}[r] \ar[d]_{\emph{Id}_C \otimes \theta} &  E(C \otimes_R F) \ar@{-->}[dl] \\
  	& I  \\}
  \end{displaymath}
 commute. On the other hand, $ E(C \otimes_R F) $ is $C$-flat by assumption. Hence, by definition of cover, there exists a homomorphism
  $ E(C \otimes_R F) \rightarrow C \otimes_R F $ whose restriction to $ C \otimes_R F $ is an automorphism, and making the
  following diagram commute:
    \begin{displaymath}
    \xymatrix{
    	&  C \otimes_R F  \ar[d]   \\
        E(C \otimes_R F) \ar@{-->}[ur]  \ar[r] & I . \\}
    \end{displaymath}
   Therefore, $ C \otimes_R F $ is a direct summand of $ E(C \otimes_R F) $ and so is injective. Thus, by Theorem 2.9(i), $ F \cong \Hom_R(C , C \otimes_R F) $ is $C$-injective, as wanted.

(vi)$\Longrightarrow$(vii). Let $M$ be a G$ _C $-injective $R$-module. Then, by definition, $M$ is a homomorphic image of a $C$-injective $R$-module, say $L$. But $ F(L) $ is $C$-injective by assumption. Now by definition of flat cover, $ F(M) $ is a direct summand of $ F(L) $, and hence is $C$-injective.

(vii)$\Longrightarrow$(v). Let $M$ be a G$ _C $-flat $R$-module and let $E$ be an injective cogenerator. Let $ R \ltimes C $ denote the trivial extension of $R$ by $C$ and view M as an
$ R \ltimes C $-module via the natural surjection $ R \ltimes C \rightarrow R $. Now $M$ is G-flat
over $ R \ltimes C $ by \cite[Theorem 2.16]{HJ}. This is the case if and only if $ \Hom_R(M,E) $ is G-injective over $ R \ltimes C $. Another use of \cite[Theorem 2.16]{HJ} shows that $ \Hom_R(M,E) $ is G$ _C $-injective. Hence, by assumption, $ F = F(\Hom_R(M,E)) $ is $C$-injective. Set $ (-)^{\vee} = \Hom_R(-, E)$ and $ F = \Hom_R(C,I) $, where $I$ is injective. Then we have \\
\centerline{ $ M \hookrightarrow M^{\vee \vee} \hookrightarrow \Hom_R(C,I)^{\vee} \cong C \otimes_R I^{\vee} $,}
in which the isomorphism is Hom-evaluation. Finally, since $ C \otimes_R I^{\vee} $ is injective, $ E(M) $  must be a direct summand of $ C \otimes_R I^{\vee} $, and hence must be $C$-flat.

 (viii)$\Longrightarrow$(iii). Let $ \fp \in \Ass_R(C)$. Then $ \depth_{R_{\fp}}(C_{\fp}) = 0$ and hence $ \Hom_R(C,E(R/ \fp)) \otimes_R \Hom_R(C,E(R/ \fp))$ is a non-zero $C$-injective $R$-module by lemma 3.1 and the assumption. Thus $C_{\fp}$ is an injective $ R_{\fp}$-module by lemma 3.2 .

 (iii)$\Longrightarrow$(ix). Note that $\Spec(S^{-1}R) = \{ \fq S^{-1}R \mid \fq \in \Ass(R)\}$, and that $R_{\fq}$ is Artinian for all  $\fq \in \Ass(R)$.
 It, therefore, follows that $\Supp_{S^{-1}R} (\Ext_{S^{-1}R}^i(T,S^{-1}C)) = \phi$ for all $ i \geq 1$ and all finitely generated $S^{-1}R$-modules $T$,
 since $(S^{-1}C)_{\fq S^{-1}R} \cong C_{\fq}$ is an injective $((S^{-1}R)_{\fq S^{-1}R} \cong) R_{\fq}$-module. Thus $S^{-1}C$ is an
 injective $S^{-1}R$-module, and so is an injective $R$-module.

(ix)$\Longrightarrow$(iv). Let $M = C \otimes_R F$ be a $C$-flat $R$-module. As $F$ is a flat $R$-module, we can
   write $F =  \underset{ \underset {i \in I} \longrightarrow } \lim (F_i)   $, where $F_i$ is a finitely generated free $R$-module. Observe
    that $E(C) = S^{-1}C$, because $ C \hookrightarrow S^{-1}C$ is essential and that $S^{-1}C$ is an injective $R$-module by assumption. Now it is easy to see that $E(M) = E(C \otimes_R F) = C \otimes_R S^{-1}F $.

(iv)$\Longrightarrow$(v). Let $M$ be a G$_C$-flat $R$-module. By definition, $M$ can be embedded in a $C$-flat $R$-module, $N$ say. Now $E(M)$ is a direct summand of $E(N)$, and hence is $C$-flat since $E(N)$ is $C$-flat by assumption.

(v)$\Longrightarrow$(i). It is trivial, since $C$ itself is a G$_C$-flat $R$-module.
\end{proof}

\begin{rem}
\emph{ Let $M$ be a G$_C$-flat $R$-module. As shown in Theorem 3.3, if $E(C)$ is $C$-flat then $E(M)$ is $C$-flat. Set $E(M) = C \otimes_R F$, where $F$ is a flat $R$-module. Then $F \cong \Hom_R(C, E(M))$. On the other and, by the definition, $M$ is a submodule of a $C$-flat $R$-module which implies that $\Ass_R(M) \subseteq \Ass_R(C)$. But
 Theorem 3.3 implies that $C_{\fp} \cong E_{R_{\fp}}(R_{\fp}/ \fp R_{\fp})$ for any prime ideal $\fp \in \Ass_R(C)$. Hence we have}
  \[\begin{array}{rl}
  \ F &\cong \emph{\Hom}_R \big(C,  \bigoplus _{ \fp \in \emph{\Ass}_R(M)} E(R/ \fp)^{\mu^0(\fp , M)} \big)\\
  &\cong \bigoplus_{ \fp \in \emph{\Ass}_R(M)} \emph{\Hom}_{R_{\fp}} \big(C_{\fp} , E_{R_{\fp}}(R_{\fp}/ \fp R_{\fp})) \big)^{\mu^0(\fp , M)}\\
  &\cong \bigoplus_{ \fp \in \emph{\Ass}_R(M)} R_{\fp}^{\mu^0(\fp , M)}.\\
  \end{array}\]
\emph{ Note that, in this case, $R_{\fp}$ is Artinian and hence $ R_{\fp}^{\mu^0(\fp , M)}$ is a completion of a free $\widehat{R_{\fp}}$-module. Therefore $E(M) = \bigoplus_{ \fp \in \Ass_R(M)} (C \otimes_R T_{\fp})$, where $T_{\fp}$ is a completion of a free $\widehat{R_{\fp}}$-module.}
\end{rem}	

\begin{lem}\label{CC}
%2.8
Assume that $ (R, \fm , k)$ is a Noetherian local ring. Then we have the following statements:
\begin{itemize}
           \item[(i)]{$\emph{\Ext}^i_R(\emph{\Hom}_R(C,E(k)),C \otimes_R \widehat{R}) \cong \emph{\Ext}^i_{\widehat{R}}(\emph{\Hom}_R(C,E(k)),C \otimes_R \widehat{R})$ for all $i \geq 0$.}
           \item[(ii)]{Suppose that $ x \in \fm$ is $R$- and  $ \emph{\Ext}_R^i(\emph{\Hom}_R(C,E(k)),C)$-regular for $ 1 \leq i \leq n$. Set $ \overline{(-)} = (-) \otimes_R R/xR$. Then for each $i$ with  $ 1 \leq i \leq n-1 $, there is an isomorphism
$ \emph{\Ext}_{\overline{R}}^{i-1}(\emph{\Hom}_{\overline{R}}({\overline{C}},E_{\overline{R}}(k)),{\overline{C}}) \cong \emph{\Ext}_R^i(\emph{\Hom}_R(C,E(k)),C) \otimes_R {\overline{R}}$
.}
\end{itemize}
\end{lem}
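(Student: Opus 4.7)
For part (i), let $M := \Hom_R(C, E(k))$. By Remark 2.11, $M$ is Artinian over $R$, and since $E_R(k) = E_{\widehat{R}}(k)$, adjunction gives $M \cong \Hom_{\widehat{R}}(C \otimes_R \widehat{R}, E_{\widehat{R}}(k))$, equipping $M$ with a canonical $\widehat{R}$-module structure. My plan is to compute both Ext groups using a single resolution. Pick an injective resolution $C \otimes_R \widehat{R} \to I^\bullet$ in $\widehat{R}$-modules. Because $\widehat{R}$ is $R$-flat and Noetherian, each $I^i$ is $R$-injective as well, so $I^\bullet$ is simultaneously an $R$- and an $\widehat{R}$-injective resolution of $C \otimes_R \widehat{R}$. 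Both Ext groups then arise as the cohomology of $\Hom(M, I^\bullet)$ computed over the respective ring, so the key point is the termwise identity $\Hom_R(M, I^i) = \Hom_{\widehat{R}}(M, I^i)$. This follows from the Artinian hypothesis on $M$: for $m \in M$, pick $k$ with $\fm^k m = 0$; then for any $R$-linear $f \colon M \to N$ with $N$ an $\widehat{R}$-module, the value $f(m)$ is $\fm^k$-annihilated, and approximating $\hat{r} \in \widehat{R}$ by $r \in R$ modulo $\fm^k \widehat{R}$ gives $f(\hat{r} m) = f(rm) = r f(m) = \hat{r} f(m)$.

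For part (ii), I would split the claim into two successive isomorphisms, linked by the auxiliary $\overline{R}$-module $M' := \Hom_{\overline{R}}(\overline{C}, E_{\overline{R}}(k))$. Since $x$ is $C$-regular (as $\Ass_R C = \Ass_R R$ by Proposition 2.5(i)), applying $\Hom_R(-, E(k))$ to $0 \to C \xrightarrow{x} C \to \overline{C} \to 0$ and using injectivity of $E(k)$ yields the short exact sequence
\[ 0 \longrightarrow M' \longrightarrow M \xrightarrow{x} M \longrightarrow 0, \]
and $M'$ may be identified with $\Hom_R(\overline{C}, E(k))$ via Hom-tensor adjunction together with $E_{\overline{R}}(k) = \Hom_R(\overline{R}, E(k))$. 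Applying $\Hom_R(-, C)$ to the displayed sequence, the long exact Ext sequence breaks (for each $i \geq 0$) into
\[ 0 \longrightarrow \Ext^i_R(M, C) \otimes_R \overline{R} \longrightarrow \Ext^i_R(M', C) \longrightarrow \Ext^{i+1}_R(M, C)[x] \longrightarrow 0, \]
where $[x]$ denotes the $x$-torsion submodule, using that the connecting maps on Ext coming from the displayed sequence are multiplication by $x$. The hypothesis that $x$ is regular on $\Ext^{i+1}_R(M, C)$ for $i + 1 \leq n$ kills the right-hand term, giving $\Ext^i_R(M', C) \cong \Ext^i_R(M, C) \otimes_R \overline{R}$ for $0 \leq i \leq n-1$.

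The remaining ingredient is the Grothendieck change-of-rings spectral sequence
\[ E_2^{p,q} = \Ext^p_{\overline{R}}\bigl(M', \Ext^q_R(\overline{R}, C)\bigr) \Longrightarrow \Ext^{p+q}_R(M', C), \]
which applies because $\Hom_R(\overline{R}, -)$ carries $R$-injectives to $\overline{R}$-injectives. From the resolution $0 \to R \xrightarrow{x} R \to \overline{R} \to 0$, one reads off $\Ext^0_R(\overline{R}, C) = C[x] = 0$ (by $x$-regularity on $C$), $\Ext^1_R(\overline{R}, C) = \overline{C}$, and $\Ext^q_R(\overline{R}, C) = 0$ for $q \geq 2$. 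The row $q = 0$ therefore vanishes and the spectral sequence collapses, producing $\Ext^i_R(M', C) \cong \Ext^{i-1}_{\overline{R}}(M', \overline{C})$ for every $i \geq 1$. Combining this with the previous step yields the stated isomorphism in the range $1 \leq i \leq n-1$. I expect the degree shift, which at its core stems from the vanishing $C[x] = 0$ rather than $\overline{C} = 0$, to be the main conceptual point, while the $x$-regularity accounting in the long exact Ext sequence requires the most careful bookkeeping.
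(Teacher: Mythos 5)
Your proof is correct, but it takes a genuinely different route from the paper's, most visibly in part (ii). For (i) the paper base-changes the minimal injective resolution of $C$ to $\widehat{R}$ and checks term by term, via Hom-evaluation, that only the $\fm$-primary components survive after applying $\Hom_R(\Hom_R(C,E(k)),-)$; you instead take an $\widehat{R}$-injective resolution of $C\otimes_R\widehat{R}$ (whose terms are also $R$-injective by flatness of $\widehat{R}$) and invoke the principle that an $R$-linear map from an $\fm$-power-torsion module into any $\widehat{R}$-module is automatically $\widehat{R}$-linear --- the same idea of one resolution computing both Ext's, but with a cleaner termwise identification. For (ii) the divergence is real: the paper applies $\Hom_R(\Hom_R(C,E(k)),-)$ to the minimal injective resolution of $C$, identifies the resulting complex as $0\to C\otimes_R\widehat{R}^{\mu^0}\to C\otimes_R\widehat{R}^{\mu^1}\to\cdots$, and then argues with cycles and boundaries that reduction modulo $x$ commutes with cohomology under the regularity hypotheses, comparing with the analogous complex over $\overline{R}$ built from $\Hom_R(\overline{R},E^i(C))$. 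You instead Matlis-dualize $0\to C\xrightarrow{x}C\to\overline{C}\to 0$ to get $0\to M'\to M\xrightarrow{x}M\to 0$ with $M=\Hom_R(C,E(k))$ and $M'\cong\Hom_{\overline{R}}(\overline{C},E_{\overline{R}}(k))$, read off from the long exact sequence the exact sequences $0\to\Ext^i_R(M,C)\otimes_R\overline{R}\to\Ext^i_R(M',C)\to\Ext^{i+1}_R(M,C)[x]\to 0$ (the regularity hypothesis killing the right-hand term for $i+1\le n$), and then shift degrees by the classical change-of-rings isomorphism $\Ext^i_R(M',C)\cong\Ext^{i-1}_{\overline{R}}(M',\overline{C})$, obtained here from the collapsing Grothendieck spectral sequence using $\Hom_R(\overline{R},C)=0$ and $\Ext^1_R(\overline{R},C)\cong\overline{C}$ (this is Rees's theorem, since $x$ is $R$- and $C$-regular and kills $M'$). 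Your route cleanly separates the two ingredients (where the $x$-regularity of the Ext modules enters versus where the degree shift comes from) and avoids the $\mu^i$ and cycle--boundary bookkeeping; the paper's route gives an explicit complex-level comparison in the notation it sets up around the minimal injective resolution. Two small points of wording: the multiplication-by-$x$ maps in your long exact sequence are those induced by $M\xrightarrow{x}M$, not the connecting homomorphisms; and in (i) what you actually need is that $M$ is $\fm$-power torsion, which follows since $C$ is finitely generated and $E(k)$ is $\fm$-power torsion (Artinianness, as you note, also suffices).
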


\begin{proof}
(i)  If $ 0 \rightarrow E^0(C) \rightarrow E^1(C) \rightarrow \cdots $ is a minimal injective
  resolution of $C$, then $ 0 \rightarrow E^0(C) \otimes_R \widehat{R} \rightarrow E^1(C) \otimes_R \widehat{R} \rightarrow \cdots $ is an injective resolution
  of $ C \otimes_R \widehat{R}$ as an $R$-module and $\widehat{R}$-module . Note that for each prime ideal $\fp$ of $R$, the injective $R$-module $ E(R/ \fp) \otimes_R \widehat{R}$, is a direct sum of copies of $E(R/ \fp)$. On the other hand, one can show that, by using the Hom-evaluation isomorphism, if $ \fp \neq \fm$,
  then
  \[\begin{array}{rl}
  \Hom_R(\Hom_R(C,E(k)) , E(R/ \fp)) &\cong  C \otimes_R \Hom_R(E(k) , E(R/ \fp))\\
    &= 0.\\
  \end{array}\]
Moreover, if $\widehat{\fq}$ is a prime ideal of $\widehat{R}$ with $ \widehat{\fq} \cap R \neq \fm$, then

  \[\begin{array}{rl}
  \Hom_R(\Hom_R(C,E(k)) , E_{\widehat{R}}(\widehat{R}/ \widehat{\fq})) &\cong  C \otimes_R \Hom_R(E(k) , E_{\widehat{R}}(\widehat{R}/ \widehat{\fq}))\\
    &= 0.\\
  \end{array}\]
Finally, by using the isomorphisms $ \widehat{R} \otimes_R E(k) \cong E(k)$, and $\Hom_R(E(k),E(k)) \cong \Hom_{\widehat{R}}(E(k),E(k))$, we have the desired isomorphism.

(ii) Set $ \overline{(-)} = (-) \otimes_R R/xR$. Note that $x$ is $C$-regular too. Let $ 0 \rightarrow E^0(C) \rightarrow E^1(C) \rightarrow ...$ be the minimal injective resolution of $C$. By applying the functor $ \Hom_R(\Hom_R(C,E(k)),-)$, we can compute $\Ext_R^i( \Hom_R(C,E(k)),C)$.
But since $\Hom_R( E(k),E(R/ \fp)) = 0$ for any prime $\fp \neq \fm$, one can see that
 \[\begin{array}{rl}
  \Hom_R(\Hom_R(C,E(k)),E^i(C)) &\cong C \otimes_R \Hom_R(E(k),E^i(C))\\
  &\cong C \otimes_R \Hom_R(E(k),E(k)^{\mu^i( \fm , C)})\\
  &\cong C \otimes_R \widehat{R}^{\mu^i( \fm , C)}.\\
  \end{array}\]
Hence we have $\Ext_R^i( \Hom_R(C,E(k)),C)$ as an $i$-th cohomology of the complex\\
 \centerline{ $ 0 \rightarrow C \otimes_R \widehat{R}^{\mu^0} \rightarrow C \otimes_R \widehat{R}^{\mu^1} \rightarrow ...$       ~~~~    (*)}
 where $ \mu^i = \mu^i( \fm , C)$.
Observe that $\mu^0 = 0$ because $\fm \notin \Ass_R(C)$. Note that by \cite[Lemma 9.2.2]{EJ1}, the complex\\
  \centerline{$ 0 \rightarrow \overline{C} \rightarrow \Hom_R( \overline{R} , E^1(C)) \rightarrow \Hom_R( \overline{R} , E^2(C)) \rightarrow  ...$ (**)}
is a minimal injective resolution of $\overline{C}$ as an $\overline{R}$-module. Thus, by applying the functor
  $\Hom_{\overline{R}}( \Hom_{\overline{R}}(\overline{C},E_{\overline{R}}(k)),-)$ on $ (**) $, we can compute
   $\Ext^i_{\overline{R}}( \Hom_{\overline{R}}(\overline{C},E_{\overline{R}}(k)),\overline{C})$ for all $i \geq 0$.
But then using the Hom-evaluation isomorphism and the fact that $ \Hom_R(\overline{R}, E(k)) \cong E_{\overline{R}}(k)$, we have the following
isomorphisms:
 \[\begin{array}{rl}
  \Hom_{\overline{R}}( \Hom_{\overline{R}}(\overline{C},E_{\overline{R}}(k)), \Hom_R(\overline{R}, E^i(C)))
   &\cong \overline{C} \otimes_{\overline{R}} \Hom_{\overline{R}}(E_{\overline{R}}(k), \Hom_R(\overline{R}, E^i(C)))\\
    &\cong \overline{C} \otimes_{\overline{R}} \Hom_R(\Hom_R(\overline{R}, E(k)) , E^i(C))\\
  &\cong \overline{C} \otimes_R \Hom_R(E(k), E^i(C)) \\
  &\cong \overline{C} \otimes_R \widehat{R}^{\mu^i} \\
  &\cong \overline{C} \otimes_{\overline{R}} \widehat{\overline{R}}^{\mu^i}.\\
  \end{array}\]
Let $B^i$ , $Z^i \subset C \otimes_R \widehat{R}^{\mu^i}$ be the images and kernels of the boundary maps of the complex $(*)$, respectively.
Observe that $x$ is not a zero-divisor on $ (C \otimes_R \widehat{R}^{\mu^i})/Z^i$
  since $ (C \otimes_R \widehat{R}^{\mu^i})/Z^i \hookrightarrow C \otimes_R \widehat{R}^{\mu^{i+1}}$ and $x$ is $(C \otimes_R \widehat{R}^{\mu^{i+1}})$-regular.
  Also the exact sequence\\
\centerline{$ 0 \rightarrow \Ext_R^i( \Hom_R(C,E(k)),C)= Z^i/B^i \rightarrow C \otimes_R \widehat{R}^{\mu^i})/B^i \rightarrow (C \otimes_R \widehat{R}^{\mu^i})/Z^i \rightarrow 0 $,}
in cojunction with the assumption, show that $x$ is $((C \otimes_R \widehat{R}^{\mu^i})/B^i)$-regular too. So that, we have two exact sequences\\
 \centerline{$ 0 = \Tor^R_1 (\overline{R} , (C \otimes_R \widehat{R}^{\mu^i})/B^i) \rightarrow B^i \otimes_R \overline{R} \rightarrow (C \otimes_R \widehat{R}^{\mu^i}) \otimes_R \overline{R} \cong  \overline{C} \otimes_{\overline{R}} \widehat{\overline{R}}^{\mu^i}$ ,}
 \centerline{$ 0 = \Tor^R_1 (\overline{R} , (C \otimes_R \widehat{R}^{\mu^i})/Z^i) \rightarrow Z^i \otimes_R \overline{R} \rightarrow (C \otimes_R \widehat{R}^{\mu^i}) \otimes_R \overline{R} \cong  \overline{C} \otimes_{\overline{R}} \widehat{\overline{R}}^{\mu^i}$.}
Consequently,  $B^i \otimes_R \overline{R}$ and $Z^i \otimes_R \overline{R}$ can be identified with images and kernels of the complex $(**)$. Therefore we have the isomorphism\\
 \centerline{$ \Ext_{\overline{R}}^{i-1}( \Hom_{\overline{R}}({\overline{C}},E_{\overline{R}}(k)),{\overline{C}}) \cong \Ext_R^i( \Hom_R(C,E(k)),C) \otimes_R {\overline{R}}$}
for all  $ 1 \leq i \leq n-1 $, as wanted.
\end{proof}

The following theorem is a generalization of \cite[Theorem 4.1]{EJ2}.
\begin{thm}\label{SQ}
%2.9
The following are equivalent:
\begin{itemize}
           \item[(i)]{$C$ is pointwise dualizing.}
           \item[(ii)] { for any $\fp \in \emph\Spec(R)$, and any $(C ,\fp)$-injective $R$-module $M$, $ \emph\Tor^R_{\emph{ht}(\fp)}(M,M) \cong M $ and  $ \emph\Tor^R_i(M,M) = 0$ for all $i \neq \emph{ht}(\fp)$}.
           \item[(iii)]{ For all $C$-injective $R$-modules $N$ and $N^{\prime}$, and all $ i \geq 0$, $\emph\Tor^R_i(N,N^{\prime})$ is $C$-injective.}
          \end{itemize}
\end{thm}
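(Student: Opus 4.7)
The plan is to prove $(i) \Rightarrow (ii) \Rightarrow (iii) \Rightarrow (i)$.

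\emph{For $(i) \Rightarrow (ii)$:} Fix $\fp \in \Spec(R)$ and set $M = \Hom_R(C, E(R/\fp))$. Since $M$ is Artinian over $R_\fp$ by Remark 2.11, $\Tor^R_i(M,M) \cong \Tor^{\widehat{R_\fp}}_i(M,M)$, so I may assume $R$ is complete local with dualizing $C$ and $M = \Hom_R(C, E(k))$, where $d = \dim R = \h(\fp)$. Hom-evaluation gives $M^\vee \cong C$, so by Matlis duality $\Tor^R_i(M,M)^\vee \cong \Ext^i_R(M,C)$, and it suffices to show $\Ext^i_R(M,C) = 0$ for $i \neq d$ and $\Ext^d_R(M,C) \cong C$. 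Applying $\Hom_R(M,-)$ to a minimal injective resolution $0 \to C \to E^0 \to \cdots \to E^d \to 0$ and invoking Hom-evaluation,
\[
\Hom_R(M, E^i) \cong C \otimes_R \Hom_R(E(k), E^i).
\]
Since $E(k)$ is $\fm$-torsion while $E(R/\fq)$ has no $\fm$-torsion for $\fq \neq \fm$, $\Hom_R(E(k), E(R/\fq)) = 0$ for $\fq \neq \fm$; combined with the dualizing Bass numbers $\mu^i(\fq, C) = \delta_{i,\h(\fq)}$, this yields $\Hom_R(M, E^i) = 0$ for $i \neq d$ and $\Hom_R(M, E^d) \cong C$. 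All differentials vanish trivially, so the cohomology is concentrated at degree $d$; Matlis dualizing produces (ii).

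\emph{For $(ii) \Rightarrow (iii)$:} Decompose the $C$-injective modules $N, N'$ into direct sums of $(C,\fp)$-injective pieces. For distinct primes $\fp \neq \fp'$, WLOG $\fp' \not\subseteq \fp$, and then $(M_{\fp'})_\fp = 0$ (any element of $\fp' \setminus \fp$ is invertible in $R_\fp$ but locally nilpotent on $E(R/\fp')$); the base-change isomorphism $\Tor^R_i(M_\fp, M_{\fp'}) \cong \Tor^{R_\fp}_i(M_\fp, (M_{\fp'})_\fp)$ forces cross-prime Tors to vanish. Same-prime terms are $(C,\fp)$-injective (or zero) by (ii), so $\Tor^R_i(N,N')$ is a direct sum of $(C,\fp)$-injectives, hence $C$-injective.

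\emph{For $(iii) \Rightarrow (i)$:} Since (iii) is inherited under localization, I may assume $(R, \fm)$ is local. Matlis duality exchanges $C$-injective and $C$-flat modules via $\Hom_R(\Hom_R(C, I), E) \cong C \otimes_R \Hom_R(I, E)$ (Hom-evaluation together with the fact that the Matlis dual of an injective is flat), so the hypothesis forces $\Ext^i_R(M, \widehat C)$ to be $C$-flat for all $i \geq 0$; by Lemma 3.5(i) this equals $\Ext^i_{\widehat R}(M, \widehat C)$, which one verifies is $\widehat C$-flat over $\widehat R$. So WLOG $R$ is complete. From the computation in the first paragraph each $\Ext^i_R(M, C)$ is a subquotient of $C^{\mu^i(\fm,C)}$, hence finitely generated; being $C$-flat it equals $C^{n_i}$, so $\Ass(\Ext^i_R(M,C)) \subseteq \Ass(R)$. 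I now induct on $d = \depth R$. For $d = 0$, $\fm \in \Ass(R)$, so Lemma 3.1 gives $M \otimes_R M \neq 0$; since (iii) at $i = 0$ makes $M \otimes_R M$ a $C$-injective module, Lemma 3.2 forces $C$ to be injective, hence dualizing. For $d > 0$, pick $R$-regular $x \in \fm$; the $\Ext^i_R(M,C)$-regularity of $x$ is automatic from the containment $\Ass(\Ext^i_R(M,C)) \subseteq \Ass(R)$, so Lemma 3.5(ii) yields $\Ext^j_{\bar R}(\bar M, \bar C) \cong \Ext^{j+1}_R(M, C) \otimes_R \bar R$, which is $\bar C$-flat for all $j \geq 0$. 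Matlis dualizing over the complete local ring $\bar R$ transfers the induction hypothesis to $(\bar R, \bar C)$, so inductively $\bar C$ is dualizing for $\bar R$, and Proposition 2.8(ii) yields $C$ dualizing.

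The hard part is the induction in $(iii) \Rightarrow (i)$: passing the Tor-injectivity hypothesis from $(R, C)$ to $(\bar R, \bar C)$ demands combining Lemma 3.5(ii) with careful Matlis-duality bookkeeping, and it depends on the observation that each $\Ext^i_R(M, C)$ is a finitely generated subquotient of a power of $C$, which is what makes the regularity hypothesis of Lemma 3.5(ii) hold automatically for any $R$-regular $x$.
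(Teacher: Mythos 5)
Your proof is correct and follows essentially the same route as the paper's: Matlis duality plus the Bass-number computation for (i)$\Rightarrow$(ii), the decomposition into $(C,\fp)$-injective summands with vanishing cross-prime Tors for (ii)$\Rightarrow$(iii), and for (iii)$\Rightarrow$(i) the same reduction chain (localize, dualize to $C$-flatness of the Ext modules, pass to the completion via Lemma 3.5(i), cut down by regular elements via Lemma 3.5(ii), and finish at depth $0$ with Lemmas 3.1, 3.2 and Proposition 2.6). The only differences are organizational: you run an induction on depth and obtain the regularity hypothesis of Lemma 3.5(ii) by noting the Ext modules are finite free multiples of $C$, whereas the paper kills a maximal regular sequence in one pass and gets the regularity directly from Proposition 2.7(i).
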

 \begin{proof}
   (i)$\Longrightarrow$(ii). Let $\fp \in \Spec(R)$, and $ M = \Hom_R(C,E(R/ \fp))$. In view of Remark 2.11, there is a
  natural $R_{\fp}$-isomorphism $ \Tor^R_i(M,M) \cong  \Tor^{R_{\fp}}_i(M_{\fp},M_{\fp}) $ for each $ i \geq 0$. Therefore, in order to establish the desired isomorphism, it is enough to show that, if $(R, \fm , k)$ is a $d$-dimensional local ring and $ M = \Hom_R(C,E(k))$, then
  $ \Tor^R_d(M,M) \cong M $ and  $ \Tor^R_i(M,M) = 0$ for all $i \neq d$ .   Set $ (-)^{\vee} = \Hom_R(-, E(k))$. Then we have the isomorphisms\\
 \centerline{$ \Tor^R_i(M,M)^{\vee} \cong \Ext^i_R(M , M^{\vee}) \cong \Ext^i_R(M , C \otimes_R \widehat{R}).$}
 Observe that $ \mu^i(\fm , C) = 0$ for all $ i \neq d$, since $C$ is dualizing. Note also that $C \otimes_R \widehat{R}$ is dualizing for $\widehat{R}$ by Proposition 2.8(i). Now using lemma 3.5(i), we have \\
\centerline{$\Ext^i_R(M , C \otimes_R \widehat{R}) \cong \Ext^i_{\widehat{R}}(M , C \otimes_R \widehat{R}) = 0$,}
for all $i \neq d$, and that
     \[\begin{array}{rl}
  \Ext^d_R(M , C \otimes_R \widehat{R}) &\cong \Ext^d_{\widehat{R}}(M , C \otimes_R \widehat{R}) \\
  &\cong \Hom_{\widehat{R}}(M , E(k)^{\mu^d(\fm\widehat{R} , C \otimes_R \widehat{R})})\\
  &\cong \Hom_R(M , E(k)^{\mu^d(\fm , C)})\\
  &\cong \Hom_R(M ,E(k)), \\
    \end{array}\]
where the third and the fourth isomorphisms hold because both $M$ and $E(k)$ are Artinian and that $ \mu^i(\fm , C) = 1 = \mu^i(\fm\widehat{R} , C \otimes_R \widehat{R})$ for all
$ i \geq 0$.  It follows that $\Tor^R_i(M,M) = 0$ for all $ i \neq d$, and $ \Tor^R_d(M,M) \cong M $.

   (ii)$\Longrightarrow$(iii). Any $C$-injective $R$-module $M$, is a direct sum of $(C, \fp)$-injective $R$-modules, where $\fp$ runs over $\Ass_R(M)$. Now  just note that if $ \fp , \fq$ are two distinct prime ideals of $R$,
    then in view of Remark 2.11, we have $ \Tor^R_i(\Hom_R(C,E(R/ \fp)), \Hom_R(C,E(R/ \fq))) = 0$ for all $ i \geq 0$, and that any direct sum of $C$-injective $R$-modules is $C$-injective.

    (iii)$\Longrightarrow$(i). Since, by Remark 2.11, the isomorphism $ \Tor^R_i(M,M) \cong  \Tor^{R_{\fm}}_i(M_{\fm},M_{\fm}) $
  holds for all $(C, \fm)$-injective $R$-modules $M$, all $ i \geq 0$ and all $\fm \in \Max(R)$, we can replace $R$ by $R_{\fm}$ and assume that $(R, \fm , k)$ is local.
   Thus  $\Tor^R_i(\Hom_R(C,E(k)),\Hom_R(C,E(k))) $ is $C$-injective for all  $ i \geq 0$. Therefore by Proposition 2.6(ii), the $R$-module
    \[\begin{array}{rl}
  \Tor^R_i(\Hom_R(C,E(k)),\Hom_R(C,E(k)))^{\vee} &\cong \Ext^i_R(\Hom_R(C,E(k)) , \Hom_R(C,E(k))^{\vee}) \\
    &\cong \Ext^i_R(\Hom_R(C,E(k)),C \otimes_R \widehat{R})\\
  &\cong \Ext^i_{\widehat{R}}(\Hom_R(C,E(k)),C \otimes_R \widehat{R}), \\
    \end{array}\]
is $C$-flat. Hence in view of lemma 3.5(i) and Proposition 2.8(i), we can replace $R$ by $\widehat{R}$ and assume that $R$ is a complete local ring. Assume that $\depth(R) = d$, and that $\underline{x}:=x_1, \ldots , x_d$ is a maximal $R$-sequence in $\fm$. Then by Proposition 2.7(i) and Lemma 3.5(ii), we see that\\
  \centerline{$  \Ext_R^d( \Hom_R(C,E(k)),C) \otimes_R R/{\underline{x}}R \cong \Hom_{R/{\underline{x}}R}( \Hom_{R/{\underline{x}}R}(C/{\underline{x}}C,E_{R/{\underline{x}}R}(k)),C/{\underline{x}}C)$,}
  is a $C/{\underline{x}}C$-flat $R/{\underline{x}}R$-module.
   Therefore in view of Proposition 2.8(ii), we can replace $R$ by $R/{\underline{x}}R$ and $C$ by $C/{\underline{x}}C$, and assume that $\depth(R) = 0$. In this case, Lemma 3.1 yields \\
    \centerline{$ \Hom_R(C,E(k)) \otimes_R \Hom_R(C,E(k)) \neq 0$.}
Observe that, $ \big( \Hom_R(C,E(k)) \otimes_R \Hom_R(C,E(k)) \big)^{\vee} \cong \Hom_R(\Hom_R(C,E(k)),C) \in \mathcal{F}_C $. Hence 
$ \Hom_R(C,E(k)) \otimes_R \Hom_R(C,E(k)) \in \mathcal{I}_C $ by 2.6(i), so that $C$ is dualizing by Lemma 3.2.
\end{proof}

\begin{rem}
\emph{After preparing this paper and while we wanted to submit it to "arxiv",  we observed the paper: a criterion for dualizing modules \cite{DNT}. The proof of Theorem 3.6 (resp. Lemma
3.5(ii)) is essentially the same as the proof of Theorem 2.7 (resp. Lemma 2.6) in
\cite{DNT}. It is just dual arguments.}
\end{rem}

The following corollary is an application of Theorem 3.6 in local cohomology. For an $R$-module $M$, the $ i $-th local cohomology module of $M$ with respect to an ideal $ \fa $ of $R$, denoted by $ \H^i_{\fa}(M) $, is defined to be \\
\centerline{$  \H^i_{\fa}(M) = \underset{\underset{n \geq 1} \longrightarrow}  \lim \Ext^i_R(R/ \fa^n , M) $.}
For the basic properties of local cohomology modules, please see the textbook \cite{BS}.

%%%%%%%%%%%%%%%%%%%%%%%%%%%%%%%%%%%%%%%%%%%%%%%%%%%%%%%%%%%%%%%%%%%%%%%%%%
\begin{cor}\label{P}
%2.10
  Let $(R, \fm)$ be a $d$-dimensional Cohen-Macaulay local ring. Then
  $$\emph\Tor^R_i(\emph\H^d_{\fm}(R),\emph\H^d_{\fm}(R)) \cong \left\lbrace
           \begin{array}{c l}
              \emph\H^d_{\fm}(R)\ \ & \text{ \ \ \ \ \ $i=d$,}\\
              0\ \   & \text{  \ \ $\text{ \ \ $i \neq d$}$.}
           \end{array}%%% ----------------------------------------------------------------------
        \right.$$%% ----------------------------------------------------------------------
\end{cor}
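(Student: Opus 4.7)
The plan is to reduce the statement to the case where $R$ is a complete local ring and then apply Theorem 3.6(ii) to the module $\H^d_{\fm}(R)$, which will turn out to be $(C, \fm)$-injective for a suitable dualizing module $C$.

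First, I would reduce to the complete case. Since $\H^d_{\fm}(R)$ is $\fm$-torsion, the natural map $\H^d_{\fm}(R) \to \H^d_{\fm}(R) \otimes_R \widehat{R}$ is an isomorphism, and it identifies with $\H^d_{\fm\widehat{R}}(\widehat{R})$ via flat base change of local cohomology. A similar argument using flat base change for $\Tor$ (noting that $\Tor^R_i(\H^d_{\fm}(R), \H^d_{\fm}(R))$ has support in $\{\fm\}$ since both arguments do) yields a natural isomorphism
\[
\Tor^R_i\bigl(\H^d_{\fm}(R), \H^d_{\fm}(R)\bigr) \;\cong\; \Tor^{\widehat{R}}_i\bigl(\H^d_{\fm\widehat{R}}(\widehat{R}), \H^d_{\fm\widehat{R}}(\widehat{R})\bigr).
\]
Hence we may assume $R$ is complete.

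Now the key step: since $R$ is a complete Cohen-Macaulay local ring, it admits a dualizing module $C$. Local duality gives a canonical isomorphism
\[
\H^d_{\fm}(R) \;\cong\; \Hom_R(C, \E(R/\fm)),
\]
which exhibits $\H^d_{\fm}(R)$ as a $(C, \fm)$-injective $R$-module in the sense of Definition 2.10. Moreover, since $R$ is local and finite-dimensional, the dualizing module $C$ is in particular pointwise dualizing.

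Finally, I would apply Theorem 3.6, specifically the implication (i)$\Longrightarrow$(ii), to the prime $\fp = \fm$. Because $R$ is Cohen-Macaulay of dimension $d$, we have $\mathrm{ht}(\fm) = d$. The theorem therefore yields
\[
\Tor^R_d\bigl(\H^d_{\fm}(R), \H^d_{\fm}(R)\bigr) \;\cong\; \H^d_{\fm}(R) \quad \text{and} \quad \Tor^R_i\bigl(\H^d_{\fm}(R), \H^d_{\fm}(R)\bigr) = 0 \ \text{for } i \neq d,
\]
which is the desired conclusion. The only real bookkeeping obstacle is the completion reduction; everything else is a direct invocation of the main theorem once the local-duality identification of $\H^d_{\fm}(R)$ as a $(C,\fm)$-injective module is in hand.
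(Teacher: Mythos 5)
Your proposal is correct and takes essentially the same route as the paper's proof: reduce to the completion via flat base change (the paper justifies the Tor base change through Artinianness of $\H^d_{\fm}(R)$ and of the Tor modules, you through their being $\fm$-torsion, which works equally well), then use local duality to identify $\H^d_{\fm\widehat{R}}(\widehat{R})$ with $\Hom_{\widehat{R}}(\omega_{\widehat{R}},E_{\widehat{R}}(\widehat{R}/\fm\widehat{R}))$, a $(\omega_{\widehat{R}},\fm\widehat{R})$-injective module, and apply the implication (i)$\Rightarrow$(ii) of the dualizing-module theorem with $\fp=\fm$, $\mathrm{ht}(\fm)=d$.
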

\begin{proof}
Note that $ \widehat{R} $ is a $ d $-dimensional complete Cohen-Macaulay ring, and hence admits a canonical module $\omega _{\widehat{R}}$. The $R$-module $ \H^d_{\fm}(R) $ is Artinian by \cite[Theorem 7.1.6]{BS}. Hence $ \Tor^R_i(\H^d_{\fm}(R) , \H^d_{\fm}(R)) $ is Artinian for all $ i \geq 0 $ by \cite[Corollary 3.2]{KLW1}. Hence there are isomorphisms
 \[\begin{array}{rl}
 \Tor^R_i(\H^d_{\fm}(R),\H^d_{\fm}(R)) &\cong \Tor^R_i(\H^d_{\fm}(R),\H^d_{\fm}(R)) \otimes_R \widehat{R}  \\
 &\cong \Tor^{\widehat{R}}_i(\H^d_{\fm}(R) \otimes_R \widehat{R} ,\H^d_{\fm}(R) \otimes_R \widehat{R})\\
 &\cong \Tor^{\widehat{R}}_i \big(\H^d_{\fm\widehat{R}}(\widehat{R}) ,\H^d_{\fm\widehat{R}}(\widehat{R}) \big), \\
 \end{array}\]
 in which the second isomorphism is from \cite[Theorem 2.1.11]{EJ1}, and the last one is the flat base change \cite[Theorem 4.3.2]{BS}. On the other hand, there are isomorphisms
    \[\begin{array}{rl}
    \H^d_{\fm}(R) &\cong \H^d_{\fm}(R) \otimes_R \widehat{R} \\
    &\cong \H^d_{\fm \widehat{R}}(\widehat{R})\\
    &\cong \Hom_{\widehat{R}} \big(\omega _{\widehat{R}}, E_{\widehat{R}}(\widehat{R}/ \fm\widehat{R}) \big), \\
    \end{array}\]
in which the first isomorphism holds because $ \H^d_{\fm}(R) $ is Artinian, the second isomorphism is the flat base change, and the last one is the local duality \cite[Theorem 11.2.8]{BS}. Hence $\H^d_{\fm \widehat{R}}(\widehat{R})$ 
is $\omega _{\widehat{R}}$-injective $ \widehat{R} $-module and then by Theorem 3.5, we have

$$\Tor^{ \widehat{R} }_i(\H^d_{\fm\widehat{R}}(\widehat{R}),\H^d_{\fm\widehat{R}}(\widehat{R})) \cong \left\lbrace
           \begin{array}{c l}
              \H^d_{\fm\widehat{R}}(\widehat{R})\ \ & \text{ \ \ \ \ \ $i=d$,}\\
              0\ \   & \text{  \ \ $\text{ \ \ $i \neq d$}$.}
           \end{array}%%% ----------------------------------------------------------------------
        \right.\\$$%% ----------------------------------------------------------------------
Therefore, in view of \cite[Remark 10.2.9]{BS}, we have 
  $$\Tor^R_i(\H^d_{\fm}(R),\H^d_{\fm}(R)) \cong \left\lbrace
  \begin{array}{c l}
  \H^d_{\fm}(R)\ \ & \text{ \ \ \ \ \ $i=d$,}\\
  0\ \   & \text{  \ \ $\text{ \ \ $i \neq d$}$.}
  \end{array}%%% ----------------------------------------------------------------------
  \right.$$%% ----------------------------------------------------------------------          
\end{proof}

\textbf{Acknowledgments.} We thank the referee for very careful reading of the manuscript and also for his/her useful
suggestions.
 		
%%% ----------------------------------------------------------------------
%%% ----------------------------------------------------------------------
%%% ----------------------------------------------------------------------
\bibliographystyle{amsplain}

\begin{thebibliography}{9}

\bibitem{BS}
~M.P. Brodmann and ~R.Y. Sharp , \emph{Local cohomology: an algebraic introduction with geometric applica-
tions}, Cambridge University Press, Cambridge, 1998.

\bibitem{CE}
~T. Cheatham and ~E. Enochs , \emph{Injective hulls of flat modules}, Comm. Algebra \textbf{8} (1980), 1989--1995.

\bibitem{DNT}
~K. Divaani-Aazar, ~M. Nikkhah Babaei, and ~M. Tousi , \emph{A criterion for dualizing modules}, Tokyo J. of Math. \textbf{38} (2015), 135--143.

\bibitem{EJ1}
~E. Enochs and ~O. Jenda, \emph{Relative Homological Algebra}, de Gruyter Expositions in
Mathematics 30, 2000.


\bibitem{EJ2}
~E. Enochs and ~O. Jenda, \emph{Tensor and torsion product of injective modules}, J. Pure Appl. Algebra \textbf{76} (1991), 143--149.


\bibitem{F}
~H.-B. Foxby, \emph{Gorenstein modules and related modules}, Math. Scand. \textbf{31} (1973), 267--284 .


\bibitem{G}
~E. S. Golod, \emph{$G$-dimension and generalized perfect ideals}, Trudy Mat. Inst. Steklov. Algebraic geometry and its applications \textbf{165} (1984), 62--66.


\bibitem{H1}
~R. Hartshorne, \emph{Local cohomology}, A seminar given by A. Grothendieck, Harvard University,
Fall, vol. 1961, Springer-Verlag, Berlin, 1967.


\bibitem{H2}
~A. Hattori, \emph{On $\Lambda$-injectivity (problem 6.3.19)}, Sugaku \textbf{8} (1956/1957), 208--209.


\bibitem{HJ}
~H. Holm, ~P. Jørgensen, \emph{Semi-dualizing modules and related Gorenstein homological dimensions}, J.
Pure Appl. Algebra, \textbf{205}(2006) 423--445.


\bibitem{I}
~T. Ishikawa, \emph{On injective modules and flat modules}, J. Math. Soc. Japan \textbf{17} (1965), 291--296.


\bibitem{K}
~B. Kubik, \emph{ Quasidualizing modules}, J. Commut. Algebra. \textbf{6} (2014), 209--229.


\bibitem{KLW1}
~B. Kubik, ~M. J. Leamer, and ~S. Sather-Wagstaff, \emph{ Homology of artinian and
	Matlis reflexive modules} I, J. Pure Appl. Algebra, \textbf{215} (2011), no. 10, 2486--2503.


\bibitem{KLW}
~B. Kubik, ~M. J. Leamer, and ~S. Sather-Wagstaff, \emph{ Homology of artinian and
mini-max modules} II, J. Algebra. \textbf{403}, no.1, (2014), 229--272.


\bibitem{TW}
 ~R. Takahashi and ~D.White, \emph{Homological aspects of semidualizing modules}, Math. Scand. \textbf{106} (2010)
5--10.


\bibitem{STWY}
~M.~Salimi, ~E.~Tavasoli, ~S. Sather-Wagstaff and ~S. Yassemi, \emph{Relative tor functor with respect to a semidualizing module}, Algebras and Representation Theory, \textbf{17}, no.1, (2014), 103--120.


\bibitem{V}
~W. V. Vasconcelos, \emph{Divisor theory in module categories,} North-Holland Math. Stud. 14, North-Holland
Publishing Co., Amsterdam (1974).


\bibitem{X}
~J. Xu, \emph{Minimal injective and flat resolutions of modules over Gorenstein rings,} J. Algebra \textbf{175} (1995), 451--477.

\end{thebibliography}
%%% ----------------------------------------------------------------------
%%% ----------------------------------------------------------------------
%%% ----------------------------------------------------------------------

%%% ----------------------------------------------------------------------
%%% ----------------------------------------------------------------------
%%% ----------------------------------------------------------------------
\end{document}